\documentclass[11pt,a4paper]{article}
\usepackage[utf8]{inputenc}
\usepackage{amsmath}
\usepackage{amsfonts}
\usepackage{amssymb}
\usepackage{amsthm}
\usepackage{enumerate}
\usepackage[dvipsnames]{xcolor}
\usepackage[bookmarks=false]{hyperref}
\usepackage{tikz-cd}
\usepackage{authblk}

\hypersetup{colorlinks,linkcolor={red!70!black},citecolor={blue!50!black},urlcolor={red!50!black}}
\author[1]{Avijit Panja\thanks{Email: avijit.0050@gmail.com - Avijit Panja is partially supported by the DST-RCN grant INT/NOR/RCN/ICT/P-03/2018 from the Dept. of Science and Technology, Govt. of India.}}
\author[2]{Rakhi Pratihar\thanks{Email: pratihar.rakhi@gmail.com - During this course of work, Rakhi Pratihar was supported by a doctoral fellowship at IIT Bombay from the University Grant Commission, Govt. of India (Sr. No. 2061641156). Currently, she is supported by Grant 280731 from the Research Council of Norway.}}
\author[3]{Tovohery Hajatiana Randrianarisoa \thanks{Email: tovo@aims.ac.za - During this course of work, Tovohery Randrianarisoa was supported by a postdoc fellowship at IIT Bombay from the Swiss National Science Foundation Grant No. 181446.}}
\affil[1]{Department of Mathematics, Indian Institute of Technology Bombay, Powai, Mumbai 400076, India}
\affil[2]{Department of Mathematics and Statistics, UiT - The Arctic University of Norway, N-9037, Tromsø, Norway}
\affil[3]{Department of Mathematical Sciences, Florida Atlantic University, Boca Raton, FL 33431, USA}

\title{Some Matroids Related to Sum-Rank Metric Codes}
\date{\vspace{-5ex}}
\bibliographystyle{plain}

\newtheorem{thm}{Theorem}
\newtheorem{pro}[thm]{Proposition}
\newtheorem{cor}[thm]{Corollary}
\newtheorem{lem}[thm]{Lemma}   
\theoremstyle{definition}            
\newtheorem{defn}[thm]{Definition}
\newtheorem{rem}[thm]{Remark}
\newtheorem{exa}[thm]{Example}

\newcommand{\0}{\mathbf{0}}
\newcommand{\nn}{\eta}
\newcommand{\Y}{\mathbf{Y}}
\newcommand{\Rk}{\mathrm{Rk}}
\newcommand{\F}{\mathbb{F}}
\newcommand{\K}{\mathbf{K}}

\renewcommand{\L}{\mathcal{L}}
\newcommand{\C}{\mathcal{C}}
\newcommand{\D}{\mathcal{D}}
\newcommand{\G}{\mathbf{G}}
\renewcommand{\H}{\mathbf{H}}
\newcommand{\N}{\mathbb{N}}
\renewcommand{\c}{\mathbf{c}}

\newcommand{\PKn}{\P(\K^{\n})}
\newcommand{\n}{\mathbf{n}}
\newcommand{\Fq}{\mathbb{F}_q}
\newcommand{\Fqm}{\mathbb{F}_{q^m}}
\newcommand{\rank}{\mathbf{rank}}
\newcommand{\p}{\perp}
\newcommand{\<}{\left<}
\renewcommand{\>}{\right>}
\newcommand{\sr}{\mathbf{srank}}

\newcommand{\x}{\mathbf{x}}
\newcommand{\y}{\mathbf{y}}

\newcommand{\M}{\mathbf{M}}
\renewcommand{\P}{\mathcal{P}}

\newcommand{\e}{\emph}
\renewcommand{\AA}{\mathbf{A}}

\newcommand{\diag}{\mathrm{diag}}

\newcommand{\Nu}{\mathcal{V}}
\renewcommand{\l}{\lambda}

\newcommand{\avijit}[1]{{\color{blue} #1}}
\newcommand{\tovo}[1]{{\color{PineGreen} #1}}


\begin{document}

\maketitle

\begin{abstract}
We introduce the notion of sum-matroids and show its association with sum-rank metric codes. As a consequence, some results for sum-rank metric codes by Mart{\'i}nez-Pe{\~n}as are generalized for sum-matroids. The sum-matroids generalize the notions of matroids and $q$-matroids. We define the generalized weights for sum-matroids and prove a Wei-type duality theorem which generalizes the analogous cases for matroids and q-matroids.
\end{abstract}
\section{Introduction}\label{Sec:1}
The sum-rank metric, a common generalization of the Hamming and the rank metric, was first introduced implicitly for space-time coding \cite{EH03,LK05} and then it was used in measuring error-correcting capability of codes in multishot network coding \cite{NU10}. Considering its practical importance, several sum-rank metric codes have been constructed in \cite{MP2018,NPRV17,NU10,PR19,UMP2019}. On the other hand, the theoretical study of support spaces corresponding to sum-rank metric was initiated in \cite{MP2019} and thereafter, the fundamental properties of codes embedded with the sum-rank metric are studied in \cite{BGR21}. 

For a positive integer $l$, let $(n_1,\ldots, n_l)$ and $(m_1,\ldots,m_l)$ be $l$-tuples of positive integers and $n = \sum\limits_{i=1}^l n_i$. For $i = 1, \ldots, l$, let $K_i$ be finite fields with a common extension $\F$ such that $\F / K_i$ is a finite extension of degree $m_i$. Then a $k$-dimensional $\F$-linear subspace $\C$ of the $n$-dimensional vector space $\F^n$ is called an $[n,k;n_1,\ldots,n_l]$-sum-rank metric code over $(\F: K_1,\ldots,K_l)$, where the \emph{sum-rank distance} between two codewords is defined as follows. Fix a $K_i$-basis $\Gamma_i$ of $\F$ so as to associate to any vector $\c^{(i)}\in \F^{n_i}$ an $m_i\times n_i$ matrix $\Gamma_i(\c^{(i)})$ with entries in $K_i$, for $0 \leq i \leq l$. Now the sum-rank distance between any $\x=(\x^{(1)},\ldots,\x^{(\ell)})$, $\y=(\y^{(1)},\ldots,\y^{(\ell)}) \in \F^n$, with $\x^{(i)}, \y^{(i)} \in \F^{n_i}$, is defined as the sum of the rank of the matrices $\Gamma_i(\x^{(i)} -\y^{(i)})$. For the particular case of $n_i = 1$ for $0 \leq i \leq l$, the sum-rank distance equals the Hamming distance \cite{Ham50, MS78} while the case $l=1$ recovers the notion of rank distance \cite{Del78, Gab85,Rot91}. 

The theory of linear Hamming metric codes has been studied in a more general context of matroids \cite{Ox06}. In 1976, Greene first mentioned several connections between matroid and coding theory \cite{Gre76} and thereafter, many authors demonstrated the importance of this connection by proving coding-theoretical results using matroid theory, e.g, see \cite{Bri07,BJM14,JRV16,JV13}. Fundamental results like descriptions of support of a codeword and higher supports of subcodes in various terms of the corresponding matroid can be found, for example, in \cite{Bri07,Gre76,Whit92}. Also, Britz, Johnsen, Mayhew and Shiromoto in \cite{BJMS12} generalized the celebrated Wei duality theorem \cite{Wei91} for matroids. Inspired by Wei's work in \cite{Wei91} about how generalized weights characterize security in wire-tap channel of type II, Kurihara et al. established the relevance of generalized rank weights for secure network coding \cite{KMU15}. As in the case of linear Hamming metric codes, generalized (rank) weights for rank metric codes also have been studied in more general contexts. The $q$-analogues of matroids, called $q$-matroids and $q$-polymatroids, have been considered by Jurrius and Pellikaan in \cite{JP2016} and by Gorla, Jurrius, Lopez, and Ravagnani in \cite{GJLR19}, respectively. Shiromoto has introduced the notion of $(q,m)$-polymatroid \cite{Shiro19} and recently, Ghorpade and Johnsen \cite{GJ19} and Britz, Mammoliti, and Shiromoto \cite{BMS19} have independently proved a Wei-type duality theorem for rank metric codes for more general combinatorial structures, called $(q,m)$-demi-polymatroids.
 
So it is natural to ask for a combinatorial structure such that coding-theoretic results for sum-rank metric codes can be studied in a more general context. It is also expected that the matroid analogue related to sum-rank metric code will generalize matroids and $q$-matroids. This is the question we address in this paper.
We introduce the notion of \emph{sum-matroids} so as to have a common generalization of the notions of matroids and $q$-matroids. By associating sum-matroids with sum-rank metric codes, we recover several results of Mart{\'i}nez-Pe{\~n}as on the theory of sum-rank metric codes. In particular, we generalize the notion of generalized weights for sum-rank metric codes to sum-matroids. Moreover, we prove a Wei-type duality theorem for the generalized weights of sum-matroids that combines the existing Wei-type duality theorems of linear codes and matroid like structures.

Another motivation of introducing sum-matroids is from the set of works \cite{GPR21,JV13,JPV21} where the authors associate to a linear code with Hamming metric (resp. rank metric), or more generally, to a matroid (resp. $q$-matroid), a fine set of invariants, called Betti numbers (resp. Virtual Betti numbers and singular homology). For Hamming metric codes, these are obtained by considering a minimal free resolution of the Stanley-Reisner ring corresponding to a matroid associated to the linear code \cite{JV13}. On the other hand, for rank metric codes the virtual Betti numbers \cite{JPV21} are defined as certain M\"obius numbers associated to the lattice of flats of the corresponding $q$-matroid and in \cite{GPR21}, the singular homology of the poset (equipped with the order topology) of independent spaces of the corresponding $q$-matroid has been computed. One can expect to generalize these results for sum-matroids by considering the associated generalized lattice structures and posets. Nonetheless, we do not consider these questions in this paper.

The paper is organized as follows. In Section \ref{Sec:2}, we collect some preliminaries concerning linear codes and matroid-like structures and outline some relevant basic notions and results. Here we include a new characterization of {\em maximum sum-rank distance} (MSRD) codes. The notion of sum-matroids is introduced in Section \ref{Sec:3}. We associate sum-matroids to sum-rank metric codes in Section \ref{Sec:4}. The generalized weights of sum-matroids are defined and a Wei-type duality theorem for them is established in Section \ref{Sec:5}. As a corollary, one recovers the Wei-type duality theorems for sum-rank metric codes, $q$-matroids and matroids. Finally, in Section \ref{Sec:6} we conclude by mentioning some further directions for studying sum-matroids.

\section{Preliminaries}\label{Sec:2}

For once and for all, we fix a positive integer $l$ and two $l$-tuples of positive integers $(n_1,\ldots,n_l)$ and $(m_1,\ldots,m_l)$. Let $n =\sum\limits_{i=1}^{l} n_i$ and  for $0 \leq i \leq l$, $K_i$'s are finite fields with a common extension $\F$ such that $[\F:K_i] = m_i$. Let $\Gamma_i$ be a $K_i$-basis of $\F$ for $0 \leq i \leq l$. We use $\n$ and $\K$ to denote the $\ell$-tuples $(n_1,\ldots,n_{\ell})$ and $(K_1,\ldots,K_{\ell})$, respectively and $\K^{\n}$ to denote the $\ell$-tuple of vector spaces $(K_1^{n_1},\ldots,K_{\ell}^{n_\ell})$. 
Throughout this paper, whenever we speak of an $[n,k;n_1,\ldots,n_l]$-sum-rank metric code, we mean the code to be over $(\F:K_1, \ldots,K_l)$ as defined in the Introduction. We use $\Fq$ to denote the finite field with $q$ elements, where $q$ is a prime power.

This section is divided into two parts. In Subsection \ref{Sec:2.1} we review the theory of supports and generalized weights of linear codes. We explicitly mention the notions and results for sum-rank metric codes, whereas the analogous notions and results for Hamming and rank metric codes are remarked as particular cases. In Subsection \ref{Sec:2.2}, we collect the corresponding notions and results about matroids and $q$-matroids.

\subsection{Linear codes}\label{Sec:2.1}
\begin{defn}
A linear code $\C$ of length $n$ and dimension $k$ over $\F$ is a $k$-dimensional $\F$-linear  subspace of $\F^n$. We call $\C$ an $[n,k]$-code.
\end{defn}

From now on, whenever we write ``code'' over $\F$, we mean ``linear code'' over $\F$. A subcode $\D$ of $\C$ is simply an $\F$-linear subspace of $\C$.

Here we shall see how the sum-rank metric defined in Introduction generalizes the Hamming and the rank metric.

First we recall from \cite[Definition 1]{MP2019} the formal definition of the sum-rank weight of a vector $\c \in \F^n$ as follows. For $\c = (c^{(1)},\ldots, c^{(l)}) \in  \F^n$, its sum-rank weight is   
\begin{equation}\label{sumrankweight}
\sr(\c)= \sum \limits_{i=1}^{\ell}  \text{rank}_{K_i}(\Gamma_i(\c^{(i)})),
\end{equation} where $\Gamma_i(\c^{(i)})$ is the coordinate matrix of $c^{(i)} \in \F^{n_i}$ w.r.t. the fixed basis $\Gamma_i$ for all $0 \leq i \leq l$. 


Assume that $\x = (x_1,\ldots,x_n)$ and $\y = (y_1,\ldots,y_n)$ are elements of $\F^n$. Then the sum-rank distance between $\x$ and $\y$, given by 
\begin{equation}\label{srd}
    d_{SR}(\x,\y):=\sr(\x-\y)
\end{equation}
defines a metric on $\F^n$, which is called the sum-rank metric \cite{NU10}.

Now it is easy to see that, for $\n = (1,\ldots,1)$, the sum-rank distance is the same as the Hamming distance defined as follows
\begin{equation}\label{hd}
    d_H(\x,\y):= |\{i\colon x_i \neq y_i, 1\leq i\leq n\}|.
\end{equation}
	The $[n,k]$-codes over $\F$ endowed with this metric are called Hamming metric codes over $\F$ \cite{Ham50,MS78}.
The case $l=1$ (say, $\K=K_1$) recovers the rank distance defined as follows. 	
\begin{equation}\label{rd}
d_R(\x,\y):= \dim_{K_1}\<x_i - y_i, 1\leq i\leq n\>=\text{rank}_{K_1}(\Gamma_1(\x) - \Gamma_1(\y)),	 
\end{equation}
where $\<x_i - y_i, 1\leq i\leq n\>$ is the $K_1$- linear subspace of $\F$ generated by the $(x_i - y_i)$'s.
Linear codes over $\F$, where $\F^n$ is endowed with above rank metric are called (vector) rank metric codes \cite{Gab85,Rot91}. For detailed discussion on rank-metric one can refer to the book chapter \cite{Gor19}.

Now we recall from \cite{MP2019} the following definitions related to support spaces corresponding to sum-rank metric.

\begin{defn}\cite[Definition 2]{MP2019}\label{def@7}
The cartesian product lattice is defined as
\[
\P(\K^{\n})=\P(K_1^{n_1}) \times \cdots \times \P(K_{\ell}^{n_{\ell}}),
\]
where $\P(K_i^{n_i})$ is the lattice of $K_i$-linear subspaces of $K_i^{n_i}$ for all $i=1,\ldots,\ell$.
\end{defn}

 This set $\P(\K^{\n})$ forms a lattice with componentwise inclusion as the partial order. For any two elements $\L,\, \L' \in \PKn$, their meet $\L \cap \L'$, join $\L + \L'$, and complement $\L^\perp$ are defined by componentwise intersection, sum and orthogonal complements, respectively. As the elements in $\PKn$ are tuples of vector spaces, we can define direct sum of two elements $\L, \, \L'$ by taking the direct sum componentwise.

\begin{defn}\cite[Definition 3]{MP2019}
For $\L=(\L_1,\ldots,\L_{\ell})$ in $\P(\K^{\n})$,

 \[\text{Rk}(\L):= \sum \limits_{i=1}^{\ell} \text{dim}_{K_i}(\L_i).\]
\end{defn}

The following definitions generalize the notion of support spaces as defined for codes with Hamming and rank metrics. 

\begin{defn}\cite[Definition 4]{MP2019}  
Let $\c=(\c^{(1)},\ldots,\c^{(\ell)}) \in \F^n$,  where $\c^{(i)} \in \F^{n_i}$, for $i=1,\ldots,\ell$.  The \emph{sum-rank support} of $\c \in  \F^n$ is defined as the $\ell$-tuple
\[
\text{supp}(\c)=(\L_1, \ldots ,\L_{\ell}) \in \P(\K^{\n}),
\] 
where $\L_i$ is the $K_i$-linear row space of $\Gamma_i(\c^{(i)})$ for $i=1,\ldots,\ell$.
\end{defn}

\begin{rem}
Note that 
\[
\sr(\c)= \sum \limits_{i=1}^{\ell}\text{rank}_{K_i}(\Gamma_i(\c^{(i)})) = \sum \limits_{i=1}^{\ell} \text{dim}_{K_i}(\L_i)=\Rk(\text{supp}(\c)).
\]
\end{rem}

\begin{defn}\cite[Definition 6]{MP2019} 
Given an $\F$-linear  subspace $\D \subset \F^n$,  its \emph{sum-rank support}  is defined as
\[
\rm {supp}(\D)= \sum \limits_{\c \in \D}\rm{supp}(\c) \in \P(\K^{\n}).
\]
The \emph{sum-rank weight} of $\D$ is $\text{wt}_{SR}(\D):=\Rk(\rm{supp}(\D))$.
\end{defn}

\begin{defn}\cite[Definition 5]{MP2019}\label{def@11}
Let $\L \in \P(\K^{\n}).$ The \emph{sum-rank support space} in $\F^n$ associated to $\L$ is defined as  the vector space
\[
\Nu_{\L}=\{\c \in \F^n: \rm{ supp }(\c) \subseteq \L\},
\]
where $\subseteq$ denotes the componentwise inclusion.
\end{defn}

Note that $\Nu_{\L}$ is an $\F$-linear subspace of $\F^n$. Moreover, as proved in \cite[Corollary 1]{MP2019},
\begin{equation}\label{eq:::2}
 \dim_{\F} \Nu_{\L}  = \Rk(\L).
\end{equation}

\begin{defn}\cite{MP2019}\label{def::12}
Let $\C$ be an $[n,k; n_1,\cdots, n_l]$-sum-rank metric code. The \emph{$i$-th generalized sum-rank weight} of $\C$ is defined as follows.
\[
d_{SR,i}(\C) = \min \{\Rk(\L) : \L \in \P(\K^{\n}) \text{ and} \;  \dim(\C \cap \Nu_{\L}) \geq i\},
\]
for $i=1, \ldots, k$.  
\end{defn}
The following result can be seen as a consequence of \cite[Proposition 4]{MP2019}. We include a proof as it makes it clear that the notion of generalized weights of sum-rank metric codes generalizes those of Hamming and rank-metric codes.
\begin{lem}\label{genweight}
Let $\C \subseteq \F^n$ be an $[n,k; n_1,\cdots, n_l]$-sum-rank metric code with $\text{dim}_{\F}(\C)=k\geq 1$. Then 
\[d_{SR,i}(\C)=\min \{{\text{wt}_{SR}}(\D) \colon \D \subseteq \C \text{ and } \dim_{\F}\D = i  \}.\]
\end{lem}

\begin{proof}
    Let $\min \{{\text{wt}_{SR}}(\D) \colon \D \subseteq \C \text{ and } \dim_{\F}\D = i  \}={\text{wt}_{SR}}(\D)$ = Rk(supp($\D$)) for some $\D\subseteq \C$ with $\dim \D= i$,
    and $d_{SR,i}(\C)=\Rk(\L)$ for some $\L\in \P(\K^{\n})$. Let $\L_{0}=\text{supp}(\D)$, so that ${\text{wt}_{SR}}(\D)=\Rk(\L_{0})$. By definition,
    \[
    \C\cap \Nu_{\L_{0}}=\{\x\in \C : \text{supp}(\x) \subseteq \L_{0}\}.
    \]
    So we have $\D \subseteq \C \cap \Nu_{\L_{0}}$ and thus $\text{dim}_{\F}(\C \cap \Nu_{\L_{0}})\geq i$. Therefore by definition, $d_{SR,i}(\C) \leq \Rk(\L_{0})={\text{wt}_{SR}}(\D)$, i.e., 
    \begin{align}\label{eq::104}
        d_{SR,i}(\C) \leq \min\{{\text{wt}_{SR}}(\D) \colon \D \subseteq \C \text{ and } \dim_{\F}\D = i \}.
    \end{align}
    Conversely, since $d_{SR,i}(\C)=\Rk(\L)$, we have $\text{dim}_{\F}(\C \cap \Nu_{\L})\geq i$. Let $\D\subseteq \C \cap \Nu_{\L}$. Then $\text{supp}(\D)\subseteq \L$, implying that ${\text{wt}_{SR}}(\D)\leq \Rk(\L)$. Hence by definition,
    \begin{align}\label{eq::105}
      \min \{{\text{wt}_{SR}}(\D) \colon \D \subseteq \C \text{ and } \dim_{\F}\D = i  \}\leq {\text{wt}_{SR}}(\D)\leq \Rk(\L)=d_{SR,i}(\C).
    \end{align}
    Combining Equations \eqref{eq::104} and \eqref{eq::105} we get 
    $d_{SR,i}(\C)=\min \{{\text{wt}_{SR}}(\D) \colon \D \subseteq \C \text{ and } \dim_{\F}\D = i  \}$.
    
\end{proof}
Since $d_{SR,1}(\C)$ is the minimum distance of the sum-rank metric code $\C$, we simply use $d_{SR}(\C)$ or $d$ instead of $d_{SR,1}(\C)$, when there is no ambiguity and we describe the code as  $[n,k,d;n_1,\dots,n_{\ell}]$-sum-rank metric code.

We have seen in the beginning of this subsection that for the cases $\n=(1,\ldots,1)$ and $l=1$, the sum-rank metric $d_{SR}$ recovers the Hamming metric and the rank metric, respectively. It is equivalent to saying that the notion of sum-rank weight $wt_{SR}$ is a generalization of the notions of Hamming weight $wt_H$ and the rank weight $wt_{R}$ of a codeword, where
\begin{equation} wt_H(\x) = d_H(\x, \0) \quad \text{ and } \quad wt_R(\x) = d_R(\x,\0).\end{equation}
Thus from Lemma \ref{genweight}, it is clear that for the particular cases $\n=(1,\ldots,1)$ and $l=1$, we get back the generalized Hamming weights \cite{Wei91} and the generalized rank weights \cite{JP2017,KMU15,KUM12,OS12}, respectively.

The following results generalize the monotonicity and Wei duality theorems for Hamming and rank metric codes.
\begin{pro}[Monotonicity]\label{mon}
For an $[n,k,d;n_1,\dots,n_{\ell}]$-sum-rank metric code $\C$ with $k> 0$, the generalized weights of $\C$ satisfy $1\leq d_1<d_2< \cdots < d_k\leq n.$
\end{pro}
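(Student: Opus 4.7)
The plan is to follow the standard Wei-style monotonicity argument, adapted to the lattice $\P(\K^{\n})$ and to the support-space formalism of Mart\'inez-Pe\~nas. First I would establish the easy bounds $1 \leq d_1$ and $d_k \leq n$. For the lower bound, if $\L = \0$ in $\P(\K^{\n})$ then $\Nu_{\L} = 0$ and $\dim(\C \cap \Nu_{\L}) = 0 < 1$, so any $\L$ that qualifies in the definition of $d_1$ must satisfy $\Rk(\L) \geq 1$. For the upper bound, take $\L = (K_1^{n_1},\ldots,K_{\ell}^{n_{\ell}})$, which is the top element of $\P(\K^{\n})$; then $\Nu_{\L} = \F^n$, hence $\dim(\C \cap \Nu_{\L}) = k$, so $d_k \leq \Rk(\L) = \sum_i n_i = n$.

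The heart of the argument is the strict inequality $d_i < d_{i+1}$ for $1 \leq i < k$. The weak inequality $d_i \leq d_{i+1}$ is immediate from Definition \ref{def::12}, since the condition $\dim(\C \cap \Nu_{\L}) \geq i+1$ implies $\dim(\C \cap \Nu_{\L}) \geq i$. To upgrade it to a strict inequality, I would pick $\L \in \P(\K^{\n})$ attaining the minimum for $d_{i+1}$, so that $\Rk(\L) = d_{i+1}$ and $\dim(\C \cap \Nu_{\L}) \geq i+1 \geq 1$. In particular $\Rk(\L) \geq 1$, so some component $\L_j$ is nonzero. Choose a $K_j$-hyperplane $\L_j' \subsetneq \L_j$ and set $\L' = (\L_1,\ldots,\L_{j-1},\L_j',\L_{j+1},\ldots,\L_{\ell})$, so that $\Rk(\L') = \Rk(\L) - 1$.

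The key point is then to control how much $\dim(\C \cap \Nu_{\L'})$ can drop relative to $\dim(\C \cap \Nu_{\L})$. By the dimension formula \eqref{eq:::2}, $\dim_{\F} \Nu_{\L} - \dim_{\F} \Nu_{\L'} = \Rk(\L) - \Rk(\L') = 1$, and clearly $\Nu_{\L'} \subseteq \Nu_{\L}$, so $\Nu_{\L'}$ is cut out inside $\Nu_{\L}$ by a single $\F$-linear equation. Intersecting with $\C$ can therefore drop the dimension by at most one:
\[
\dim(\C \cap \Nu_{\L'}) \geq \dim(\C \cap \Nu_{\L}) - 1 \geq i.
\]
Hence $\L'$ is admissible in the definition of $d_i$, and $d_i \leq \Rk(\L') = d_{i+1} - 1$, giving the desired strict inequality.

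The only step requiring a little care is this last codimension-one comparison between $\Nu_{\L}$ and $\Nu_{\L'}$; everything else is a direct manipulation of Definitions \ref{def@7}, \ref{def@11}, and \ref{def::12}. No analogue of a deletion/contraction argument or a basis-exchange is needed here, because the support lattice $\P(\K^{\n})$ provides an ambient poset in which one can shrink an element by one rank unit at a time, and Mart\'inez-Pe\~nas's identity $\dim_{\F} \Nu_{\L} = \Rk(\L)$ ensures that such a shrinkage costs exactly one dimension in the ambient support space.
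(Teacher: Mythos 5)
Your proof is correct. Note that the paper does not actually prove this proposition itself: it simply cites \cite[Lemma 4]{MP2019}. Your argument is a legitimate self-contained substitute, and all the individual steps check out: the bounds $1\leq d_1$ and $d_k\leq n$ via the bottom and top elements of $\P(\K^{\n})$, the weak monotonicity from the definition, and the key codimension-one step. That last step is sound because $\Nu_{\L'}\subseteq\Nu_{\L}$ with $\dim_{\F}\Nu_{\L}-\dim_{\F}\Nu_{\L'}=\Rk(\L)-\Rk(\L')=1$ by \eqref{eq:::2}, so $\C\cap\Nu_{\L'}$ is the kernel of a linear functional restricted to $\C\cap\Nu_{\L}$ and loses at most one dimension. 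It is worth observing that your argument is precisely the code-level shadow of the proof the paper later gives for the sum-matroid version of monotonicity (in Section \ref{sec:5}): there one shrinks $\L$ by one rank unit and uses $\rho(\L)\leq\rho(\L')+1$ to show the nullity drops by at most one, and via the identity $\dim(\C\cap\Nu_{\L})=\eta_{\C}(\L)$ established in Theorem \ref{29} the two arguments are literally the same. Your version buys independence from the matroid axioms at the cost of invoking the support-space dimension formula; the paper's later version buys generality (it applies to arbitrary sum-matroids, not just representable ones).
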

\begin{proof}
For a proof, see \cite[Lemma 4]{MP2019}.
\end{proof}
\begin{rem}
For Hamming metric codes, a proof can be found in \cite[Theorem 1]{Wei91}. For rank metric codes, one can see (\cite[Lemma 9]{KMU15}).
\end{rem}
For Hamming metric codes, Wei proved in \cite{Wei91} that the set of generalized weights of a linear code determines the set of generalized weights of its dual. Later Wei-type duality theorems have been proved for codes with rank metric and sum-rank metric. Here we state the theorem for sum-rank metric codes which covers the particular cases of the Hamming and the rank-metric codes. 

\begin{thm}[Wei duality]\label{thm@5}
Let $\C$ be an $[n,k,d;n_1,\dots,n_{\ell}]$-sum-rank metric code. Let $(d_1,d_2,\ldots,d_k)$ and $(d_1^\perp,d_2^\perp,\ldots,d_{n-k}^\perp)$ denote the weight hierarchies of $\C$ and $\C^\perp$, respectively. Then \[\{d_t:1\leq t\leq k\}=\{1,2,\ldots,n\} \setminus \{n+1-d_r^\perp:1\leq r\leq n-k\}.\]
\end{thm}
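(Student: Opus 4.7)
The plan is to imitate the classical Wei duality proof by working directly in the cartesian product lattice $\P(\K^{\n})$. The key ingredient is the lattice isomorphism $\L \mapsto \Nu_{\L}$ of \cite[Proposition 1]{MP2019}, which interchanges componentwise orthogonal complementation with $\F$-linear orthogonal complementation in $\F^n$; in particular, $\Nu_{\L^{\perp}} = (\Nu_{\L})^{\perp}$, while $\dim_{\F} \Nu_{\L} = \Rk(\L)$ by \eqref{eq:::2}. Combining the standard dimension formula $\dim(\C + \Nu_{\L}) = k + \Rk(\L) - \dim(\C \cap \Nu_{\L})$ with $(\C + \Nu_{\L})^{\perp} = \C^{\perp} \cap \Nu_{\L^{\perp}}$ yields, for every $\L \in \P(\K^{\n})$, the key identity
\[
\dim_{\F}(\C \cap \Nu_{\L}) - \dim_{\F}(\C^{\perp} \cap \Nu_{\L^{\perp}}) = \Rk(\L) - (n-k).
\]

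Next, I introduce the nondecreasing functions $A, B \colon \{0,1,\ldots,n\} \to \N_0$ defined by
\[
A(s) = \max\{\dim_{\F}(\C \cap \Nu_{\L}) : \L \in \P(\K^{\n}),\ \Rk(\L) = s\},
\]
\[
B(u) = \max\{\dim_{\F}(\C^{\perp} \cap \Nu_{\M}) : \M \in \P(\K^{\n}),\ \Rk(\M) = u\}.
\]
By Definition \ref{def::12}, $d_t(\C)$ is the smallest $s$ with $A(s) \geq t$, so the weight hierarchy $\{d_1(\C), \ldots, d_k(\C)\}$ is precisely the set of jump points of $A$; likewise $\{d_r^{\perp} : 1 \leq r \leq n-k\}$ is the set of jump points of $B$. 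Since $\L \mapsto \L^{\perp}$ restricts to a bijection between elements of rank $s$ and elements of rank $n-s$, the key identity rearranges to
\[
A(s) = B(n-s) + s - (n-k).
\]
Moreover $A(0) = 0$ and $A(n) = k$, and by the monotonicity theorem the $d_t(\C)$ are $k$ distinct integers, forcing $A$ to have exactly $k$ unit jumps in $\{1,\ldots,n\}$; the same holds for $B$ with $n-k$ unit jumps.

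The conclusion is then a finite-difference check: taking the first difference of the identity $A(s) = B(n-s) + s - (n-k)$ gives
\[
A(s) - A(s-1) = 1 - \bigl[B(n-s+1) - B(n-s)\bigr],
\]
so $A$ jumps at $s$ if and only if $B$ does not jump at $n+1-s$. Translating back to the weight hierarchies yields
\[
\{d_t(\C) : 1 \leq t \leq k\} = \{1,\ldots,n\} \setminus \{n+1-d_r^{\perp} : 1 \leq r \leq n-k\},
\]
as required. The main obstacle I anticipate is establishing the lattice-level duality $\Nu_{\L^{\perp}} = (\Nu_{\L})^{\perp}$ cleanly; once this is in hand via \cite[Proposition 1]{MP2019}, the rest of the argument is standard dimension counting, with the cartesian product lattice $\P(\K^{\n})$ playing the role that the Boolean lattice plays in Wei's original proof for the Hamming metric and the subspace lattice plays in the rank-metric version of \cite{DK15}.
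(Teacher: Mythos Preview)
Your argument is correct. The key identity, the bijection $\L\mapsto\L^{\perp}$ between rank-$s$ and rank-$(n-s)$ elements, and the finite-difference step all go through as written; the only point worth making explicit is that the unit-jump property of $A$ follows either from the identity $A(s)-A(s-1)=1-[B(n-s+1)-B(n-s)]\leq 1$ or, equivalently, from monotonicity of the $d_t$ (a jump of size $\geq 2$ at $s$ would force $d_t=d_{t+1}=s$).

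As for comparison: the paper does not give its own proof of Theorem~\ref{thm@5}, but simply cites \cite[Theorem~2]{MP2019}. The paper's independent contribution is to re-derive this statement as the special case of the sum-matroid Wei duality, Theorem~\ref{thm@55}, via Theorem~\ref{29}. That proof follows Wei's original two-claim template: for each $r$ one shows $d_t^{\perp}\leq n-d_r$ with $t=k+r-d_r$, and then $d_{t+s}^{\perp}\neq n-d_r+1$ for all $s>0$, using the nullity functions $\eta$, $\eta^{*}$ directly. Your approach instead packages everything into the single functional identity $A(s)=B(n-s)+s-(n-k)$ and reads off the complementarity of jump sets by differencing. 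Both arguments rest on the same underlying dimension relation (your key identity is exactly $\eta_\C(\L)-\eta_\C^{*}(\L^{\perp})=\Rk(\L)-(n-k)$ in the paper's language), so the difference is organizational rather than conceptual: your route is shorter and stays at the level of codes, while the paper's route via $\eta$ and $\eta^{*}$ is phrased so as to apply to arbitrary sum-matroids, not just those arising from codes.
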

\begin{proof}
For a proof, see \cite[Theorem 2]{MP2019}.
\end{proof}
\begin{rem}
For a proof in the case of Hamming metric, one can see \cite[Theorem 3]{Wei91} and for rank metric codes, see, e.g., \cite[Thm. I.3]{BMS19},\cite{DK15}.
\end{rem}

\begin{thm}[Generalized Singleton bound,~\cite{MP2019}]
For an $[n,k,d;n_1,\dots,n_{\ell}]$-sum-rank metric code $\C$, the generalized sum-rank weights satisfy
\[
d_{SR,i}(\C) \leq n-k+i.
\]
\end{thm}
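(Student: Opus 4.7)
The plan is to exhibit a particular $\L \in \P(\K^{\n})$ of rank exactly $n-k+i$ for which $\dim_{\F}(\C \cap \Nu_{\L}) \geq i$; then the definition of $d_{SR,i}(\C)$ as a minimum immediately gives the bound. In other words, the argument is a direct dimension-count analogue of Wei's proof of the classical Singleton bound, but transported through the lattice $\P(\K^{\n})$.

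First I would produce such an $\L$. Since $n = n_1 + \cdots + n_{\ell}$ and $1 \leq i \leq k \leq n$, the integer $s := n-k+i$ satisfies $0 \leq s \leq n$. Write $s = \sum_{j=1}^{\ell} s_j$ with $0 \leq s_j \leq n_j$ (for example, greedily filling up $n_1, n_2, \ldots$ until the total reaches $s$). For each $j$, pick any $K_j$-linear subspace $\L_j \subseteq K_j^{n_j}$ with $\dim_{K_j} \L_j = s_j$, and set $\L = (\L_1, \ldots, \L_{\ell})$. By definition $\Rk(\L) = \sum_j s_j = n-k+i$.

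Next I would invoke the identity $\dim_{\F} \Nu_{\L} = \Rk(\L)$ from Equation \eqref{eq:::2} (Corollary 1 of \cite{MP2019}). Since $\C$ and $\Nu_{\L}$ are both $\F$-linear subspaces of the $n$-dimensional $\F$-space $\F^n$, the standard Grassmann inequality yields
\[
\dim_{\F}(\C \cap \Nu_{\L}) \;\geq\; \dim_{\F} \C + \dim_{\F} \Nu_{\L} - n \;=\; k + (n-k+i) - n \;=\; i.
\]
Hence $\L$ is admissible in the minimum defining $d_{SR,i}(\C)$, so $d_{SR,i}(\C) \leq \Rk(\L) = n-k+i$.

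There is no real obstacle here beyond confirming that $s = n-k+i$ is achievable as the rank of some element of $\P(\K^{\n})$, which is immediate from $n = \sum_j n_j$ and the fact that $\P(K_j^{n_j})$ contains subspaces of every dimension from $0$ to $n_j$. The key input is the computation of $\dim_{\F} \Nu_{\L}$; everything else is the Grassmann dimension bound inside $\F^n$.
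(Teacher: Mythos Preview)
Your argument is correct: constructing an $\L$ of rank $n-k+i$ and applying the Grassmann inequality together with $\dim_{\F}\Nu_{\L}=\Rk(\L)$ gives the bound immediately. Note, however, that the paper does not supply its own proof of this theorem; it is simply quoted from \cite{MP2019}, so there is no in-paper argument to compare against. Your proof is precisely the standard dimension-count argument (and is in fact how the result is obtained in \cite{MP2019}), so nothing further is needed.
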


\begin{defn}\label{def@13}
Given a $k$-dimensional linear code $\C \subseteq \F^n$, its  \emph{maximum sum-rank distance} (MSRD) rank is defined as the minimum integer $i\in\{1,\ldots,k\}$ such that $d_{SR,i}(\C)=n-k+i$, if such an $i$ exists. In such a case, we say that $\C$ is an $i$-MSRD code.  $1$-MSRD codes are simply called MSRD codes.
\end{defn}

Examples of MSRD codes can be found, e.g., in \cite{MP2019, PR19}.
The bound on $d_1$ in the previous theorem is called the \emph{Singleton} bound \cite{Del78,Gab85,Sin64}. Hamming (resp. rank) metric codes attaining the Singleton bound are called maximum distance separable (MDS) (resp. maximum rank distance (MRD)) codes. For more about the properties of these classes of codes, one can look at \cite{Del78,Gab85,HM17,MS78,NHRR18}. If $i$ is the smallest integer such that $d_i = n -k+i$, then the code is called $i$-MDS (resp. $i$-MRD) in case of Hamming (resp. rank) metric codes.

The following result gives a characterization of MDS codes.
\begin{thm}[{\cite[Chap. 11, Corollary 3]{MS78}}]\label{6} 
Let $\G$ be a generator matrix of an $[n,k]$-linear code $\C$ over $\Fq$. Then $\C$ is MDS if and only if every set of $k$ distinct columns of $\G$ are linearly independent.
\end{thm}

For rank metric codes, the characterization is stated as follows. 

\begin{thm}[\cite{Gab85}]\label{7}
Let $\G$ be a generator matrix of an $[n,k]$-rank metric code $\C$ over the extension $\Fqm/\Fq$. Then $\C$ is MRD if and only if the product of matrices $\G\M$ is invertible for every $n\times k$ matrix $\M$ of rank $k$ over $\Fq$.
\end{thm}

As a consequence of these statements it can be easily seen that $\C$ is MDS (resp. MRD) if and only if $\C^\perp$ is MDS (resp. MRD).
 
 Here we give a new characterization of MSRD codes which combines Theorems \ref{6} and \ref{7}.

\begin{thm}\label{thm::24}
Let $\C$ be an $[n,k,d;n_1,\dots,n_{\ell}]$-sum-rank metric code  over $(\F;K_1,\dots,K_{\ell})$ with generator matrix $\G$. Then $\C$ is MSRD if and only if $\G \AA$ is invertible for every $n\times k$ matrix $\AA$ of rank $k$ such that $\AA=\diag(\AA_1,\dots,\AA_{\ell})$ where each $\AA_i$ is an $n_i\times k_i$ matrix over $K_i$ , $k_i\leq n_i$, and $\sum_{i=1}^{\ell} k_i = k$.
\end{thm}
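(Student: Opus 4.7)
The plan is to translate the MSRD property into an invertibility condition on $\G\AA$ via the support-space formalism of Section \ref{sec:2}. The starting point is to reformulate MSRD in terms of support spaces: combining Definition \ref{def::12} with the generalized Singleton bound, $\C$ is MSRD if and only if $\C\cap\Nu_\L=\{0\}$ for every $\L\in\P(\K^\n)$ with $\Rk(\L)=n-k$. One direction is immediate from $d_{SR,1}(\C)=n-k+1$; for the converse, any $\L$ with $\Rk(\L)<n-k$ can be enlarged component-wise to $\L'\supseteq\L$ with $\Rk(\L')=n-k$, since some $\dim_{K_i}\L_i<n_i$ must hold, and $\Nu_{\L'}\supseteq\Nu_\L$ preserves a nonzero intersection with $\C$.

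Next, I would set up a correspondence between such $\L$ and the admissible block-diagonal matrices $\AA$. Given $\L=(\L_1,\ldots,\L_\ell)$ with $\dim_{K_i}\L_i=n_i-k_i$ and $\sum k_i=k$, choose for each $i$ any $\AA_i\in K_i^{n_i\times k_i}$ whose column space equals the orthogonal complement $\L_i^\perp\subseteq K_i^{n_i}$. Then $\AA:=\diag(\AA_1,\ldots,\AA_\ell)$ has rank $k$ and satisfies the hypotheses of the theorem. Conversely, every such $\AA$ yields an $\L$ with $\Rk(\L)=n-k$ via $\L_i:=(\mathrm{col}(\AA_i))^\perp$, which is exactly the left null space of $\AA_i$. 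Two choices of $\AA_i$ with the same column space differ by an invertible $k_i\times k_i$ matrix on the right, so the invertibility of $\G\AA$ depends only on $\L$.

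The core computational step, which I expect to be the most delicate piece, is the identity
\[
\Nu_\L=\{\c\in\F^n:\c\AA=0\}.
\]
Writing $\c=(\c^{(1)},\ldots,\c^{(\ell)})$ and using $\c^{(i)}_j=\sum_{s=1}^{m_i}(\Gamma_i(\c^{(i)}))_{s,j}\alpha_s^{(i)}$, one computes
\[
(\c^{(i)}\AA_i)_r=\sum_{s=1}^{m_i}\alpha_s^{(i)}(\Gamma_i(\c^{(i)})\AA_i)_{s,r}.
\]
By the $K_i$-linear independence of $\Gamma_i$, this vanishes for all $r$ iff $\Gamma_i(\c^{(i)})\AA_i=0$, iff every row of $\Gamma_i(\c^{(i)})$ lies in the left null space $\L_i$ of $\AA_i$, iff $\mathrm{supp}(\c)_i\subseteq\L_i$. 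Taking all $i$ together gives the identity.

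With the identity in hand, a nonzero $x\in\F^k$ satisfies $x\G\AA=0$ iff $\c=x\G$ is a nonzero codeword lying in $\C\cap\Nu_\L$, so $\C\cap\Nu_\L=\{0\}$ is equivalent to invertibility of the square matrix $\G\AA$. Combined with the first reduction, this yields both directions of the theorem.
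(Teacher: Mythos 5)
Your proof is correct, and it reorganizes the argument rather than merely reproducing the paper's. The paper works directly with codewords: for the forward direction it shows $x\mapsto x\G\AA$ is injective because $x\G\AA=0$ forces $\rank_{K_i}(\Gamma_i(\c^{(i)}))\le n_i-k_i$ for each $i$, hence $\sr(x\G)\le n-k$, contradicting $d=n-k+1$; for the converse it starts from a hypothetical codeword of sum-rank at most $n-k$, builds annihilating blocks $\AA_i$ of sizes $n_i\times(n_i-r_i)$, and then selects $k$ of the resulting $u\ge k$ columns to obtain an admissible $\AA'$ with $\c\AA'=0$. Your route replaces this asymmetric two-case argument with two clean equivalences --- $\C$ is MSRD iff $\C\cap\Nu_{\L}=\{0\}$ for every $\L$ of rank $n-k$, and $\Nu_{\L}=\{\c\in\F^n:\c\AA=0\}$ under the correspondence $\L_i=(\mathrm{col}(\AA_i))^{\perp}$ --- after which both directions of the theorem fall out symmetrically from the invertibility criterion for the square matrix $\G\AA$. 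What you gain is that you never need the column-selection step (nor the attendant, slightly glossed-over check in the paper that the selected columns still form a full-rank admissible block-diagonal matrix); what you pay is the preliminary reduction of $d_{SR,1}(\C)=n-k+1$ to the corank-$k$ condition, which requires the component-wise enlargement argument you correctly supply. The computational core --- that $\c^{(i)}\AA_i=0$ over $\F$ is equivalent to $\Gamma_i(\c^{(i)})\AA_i=0$ over $K_i$, by expanding in the basis $\Gamma_i$ --- is shared by both proofs, though the paper leaves it implicit, and your explicit identification of $\Nu_{\L}$ as a kernel is a reusable statement in its own right.
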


\begin{proof}
Assume that $\C$ is MSRD. Let $\AA=\diag(\AA_1,\dots,\AA_{\ell})$ with the properties stated in the theorem. Consider the $\F$-linear map
$$
\F^k  \rightarrow \F^k \quad \text{ defined by } \quad
\x  \mapsto \x \G\AA.$$

It is enough to show that this map is injective. Assume that $\x \G\AA=0$. Thus $(\x \G)\AA = 0$. Notice that $\x \G$ is a codeword of $\C$ and write it as $\x \G=(\c^{(1)},\dots,\c^{(\ell)})$. Hence $\c^{(i)} \AA_i = 0$ for $i=1,\dots,\ell$. This implies that $\text{rank}_{K_i}(\Gamma_i(\c^{(i)}))\leq n_i-k_i$. Taking the sum we get $\sr (\x \G)\leq n-k$. Since $\x \G$ is a codeword of $\C$ and $\C$ has minimum distance $n-k+1$, we see that $\x \G$ must be the zero codeword and thus $\x$ itself is the zero vector. Therefore our map is injective which implies that $\G\AA$ is invertible.
	
Conversely, suppose that $\G\AA$ is invertible for every $n\times k$ matrix $\AA$ with the properties in the theorem. Let $\c$ be a codeword of $\C$. If $\c\in \C$ has sum rank smaller than $n-k+1$ then assume that $\c = (\c^{(1)},\dots,\c^{(\ell)})$ such that $\text{rank}_{K_i}(\Gamma_i(\c^{(i)})) = r_i$ and $\sum_{i-1}^{\ell} r_i\leq n-k$. 

For each $i$, we can find an $n_i\times (n_i-r_i)$ matrix $\AA_i$ of rank $n_i-r_i$ over $K_i$ such that $\c^{(i)} \AA_i = 0\in \F^{n_i-r_i}$. Let $\AA=\diag(\AA_1,\dots,\AA_{\ell})$. Thus $\c \AA=0$ where $\AA$ has $u=\sum_{i=1}^l (n_i-r_i)$ columns and $\text{rank}(\AA)=u$. Since $u \geq k$, we can choose a submatrix $\AA'$ of $\AA$ of rank $k$ such that  $\c \AA'=0$.
As a codeword of $\C$, $\c$ can be written as $\c = \x \G$ and therefore $\x \G\AA'=0$ and therefore we have a contradiction since $\G\AA'$ is invertible.

\end{proof}

\subsection{($q$-)Matroids }\label{Sec:2.2}

We now recall the notion of matroids and its $q$-analogue, called  $q$-matroids. These are combinatorial objects which can be seen as generalizations of linear codes embedded with Hamming metric or rank metric. We shall see in the next section how these are generalized into a common notion of sum-matroids.

\begin{defn}[Matroids]
A matroid $M=(E,\rho)$ is a pair such that $E$ is a finite set and $\rho$ is a function $2^E \rightarrow \N_0$, called the {\em rank} function, satisfying the following properties for all $A, B \subseteq E$:
\begin{enumerate}[(r1)]
\item \label{r1}   $\rho(\emptyset)=0$, 
\item \label{r2} if $A\subseteq B$, then $\rho(A) \leq \rho(B)$,
\item \label{r3} $\rho(A \cap B) + \rho(A \cup B) \leq \rho(A) + \rho(B)$.
\end{enumerate}
The rank of the matroid $M$, denoted as $\rho(M)$, is defined to be $\rho(E)$.
\end{defn}

For the other cryptomorphic definitions and more detailed treatment of matroids one can look at \cite{Ox06,Whi86}.

\begin{exa}
Let $E=[n]:=\{1,\dots, n\}$. 
For $k\leq n$, let $\H$ be a $k\times n$ matrix over a finite field $\Fq$. For a subset $A\subseteq E$, let $\H_A$ be the submatrix of $\H$ with the columns indexed by $A$.
Let $\rho$ be the function defined by 
\begin{align*}
    \rho:2^E & \longrightarrow \N_0 \\
    A &\longmapsto \text{rank}(\H_A).
\end{align*}
Then $(E,\rho)$ defines a matroid denoted by $M_\H$. 
\end{exa}

A matroid which can be defined as in the previous example is called a {\em representable matroid}. There are examples of matroids which are not representable for any finite field $\Fq$ (\cite[Chap. 3]{Ox06}). Representable matroids can be thought as a generalization of linear codes as we see in the following definition.
\begin{defn}
    Let $\C$ be an $[n,k]$-linear code over $\Fq$. Let $\H$ be a parity check matrix of $\C$, the matroid associated to $\C$ is the representable matroid defined on $E=[n]$ by the matrix $\H$. 
\end{defn}

Following this definition, it is natural to generalize some invariants from linear codes to matroids. The next definition is a generalization of the generalized weights of linear codes to matroids \cite{JV13}.

\begin{defn}[Generalized weights of matroids]\label{defn:22}
Let $M=(E,\rho)$ be a matroid. For $1\leq i \leq n-\rho(E)$, the $i$-th generalized weight of $M$ is defined as 
\[
d_i = \min_{\sigma\in \P(E)}\{ |\sigma|: |\sigma|-\rho(\sigma) = i\}.
\]
\end{defn}

\begin{defn}[Dual matroids \cite{Ox06}]
Let $M=(E,\rho)$ be a matroid. The dual matroid of $M$ is the matroid $M^* = (E,\rho^*)$, where for $U \subseteq E$,
\[
\rho^*(U) = |U| + \rho(E \backslash U) - \rho(M).
\]
\end{defn}

Next we describe the notion of $q$-matroids which are the $q$-analogue of the matroids as introduced in \cite{JP2016}. These are $q$-analogues in the sense that the finite sets for matroids are replaced with finite dimensional vector spaces over a finite field $\Fq$ and cardinality of a set is replaced by dimension of a space. Let $V=\Fq^n$ be an $n$-dimensional vector space over $\Fq$ and with abuse of notation, let $\P(V)$ be the set of all subspaces of $V$.

\begin{defn}[$q$-Matroids]
A $q$-matroid $M=(V,\rho)$ is a pair such that $V$ is a vector space of finite dimension of $\Fq$ and $\rho$ is a function $\P(V)\rightarrow \N_0$ called the {\em rank function} satisfying the following properties for all $A, B \subseteq E$ and $a \in E$:
\begin{enumerate}[($\rho$1)]
\item \label{rho1}   $\rho(\{\0\})=0$, 
\item \label{rho2} if $A\subseteq B$, then $\rho(A) \leq \rho(B)$,
\item \label{rho3} $\rho(A \cap B) + \rho(A + B) \leq \rho(A) + \rho(B)$.
\end{enumerate}
The rank of the $q$-matroid $M$, denoted as $\rho(M)$, is defined to be $\rho(V)$.
\end{defn}

Similar to matroids, various cryptomorphic defitions of $q$-matroids can be found in \cite{BCR21,JP2016}.
The following classes of $q$-matroids are given by Jurrius and Pellikaan in \cite{JP2016}.

\begin{exa}
Let $\H$ be a $k\times n$ matrix over a finite field $\Fqm$ and let $\D$ be its rowspace. Let $U\subseteq \Fq^n$ be an $\Fq$-subspace of dimension $t$ and assume that $\Y$ is a generator matrix of $U$.
 We define a map $\pi_U\colon \Fqm^n\rightarrow \Fq^t$ such that $\pi_U(\x) = \x\Y^T$. Taking $\D_U = \pi_U(\D)$ we define rank function $\rho$ by $\rho(U) = \dim_{\Fq} (\D_U)$. In fact we have $\rho(U) = \text{rank}(\H \Y^T)$ and it has been shown in \cite[Theorem 4.8]{JP2016} that $\rho$ indeed defines a rank function of a $q$-matroid $M_{\D} = (\Fq^n,\rho)$ of rank $k$ associated to the matrix $\H$.
\end{exa}

A $q$-matroid which can be defined as in the previous example is also called a {\em representable $q$-matroid} over $\Fq$. There are examples of non-representable $q$-matroids for any finite field $\Fq$  \cite{BCR21,CJ21}. The representable $q$-matroids lead to the notion of $q$-matroids associated to a rank metric code.
\begin{defn}
    Let $\C$ be an $[n,k]$-linear code over $\Fqm/\Fq$. Let $\H$ be a parity check matrix of $\C$, the $q$-matroid associated to $\C$ is the representable $q$-matroid defined on $\Fq^n$ by the matrix $\H$. 
\end{defn}

\begin{rem}
Our definition is slightly different from the original definition in \cite{JP2016}. In that paper, the matroid associated to a rank metric code is defined as the representable matroid associated to its generator matrix. In order to show the generalization to sum-matroids and also the analogy to classical matroids, it is desirable to work with the parity check matrix.
\end{rem}

As the notion of $q$-matroids is a generalization of the notion of rank metric codes, we can also extend the notion of generalized weights to $q$-matroids. Here we state the definition from \cite{GJ19}. 
 In \cite{GJ19}, the generalized weights of \linebreak $(q,m)$-polymatroids, which corresponds to (matrix) rank metric codes, are defined. But any vector rank metric code can be seen also as a (matrix) rank metric code. Hence the generalized weights of $q$-matroids are just particular cases those for $(q,m)$-polymatroids. In what follows, whenever we mention the results from \cite{GJ19}, it will be understood that these are for the special class of Delsarte rank metric codes which arise from vector rank metric codes or $(q,m)$-polymatroids with $m=1$. The case $m=1$ is further studied in \cite{GJLR19}.

\begin{defn}[Generalized weights of $q$-matroids]\label{defn:28}
Let $V$ be an $n$-dimensional vector space over $\Fq$ and let $M=(V,\rho)$ be a $q$-matroid. The $i$-th generalized weights of $M$ is defined as
\[
d_i^R(M) = \min\{ \dim_{\Fq} U\colon U\in \P(V) \text{ and } \dim_{\Fq} U - \rho(U)= i \},
\]
for $i=1, \dots, n - \rho(V)$.
\end{defn}

\begin{defn}[Dual $q$-matroids \cite{JP2016}]
Let $M = (V,\rho)$ be a $q$-matroid. The dual $q$-matroid $M^*$ of $M$ is the $q$-matroid $M^* = (V,\rho^*)$,  where for $U\in \P(V)$,
\[
\rho^*(U) = \dim_{\Fq} U + \rho(U^\perp) - \rho(M).
\]
\end{defn}

We finish this section with a generalization of two properties of the generalized weights of linear codes. The first property is about the monotonicity which generalizes the Proposition \ref{mon}. The second property generalizes the Wei duality for linear codes in Theorem \ref{thm@5} to ($q$-)matroids. For simplicity, we combine the statements for both matroids and $q$-matroids.

\begin{pro}[Monotonicity \cite{BJMS12}]\label{pro:mono} Let $M=([n], \rho)$ (resp. $M=(\Fq^n, \rho)$) be a matroid (resp. $q$-matroid) of rank $k$. For $1 \leq i\leq n-k$, if $d_i$'s are the generalized weights of $M$, then $d_1 < d_2 < \cdots < d_{n-k}$. 
\end{pro}

The Wei-type duality theorems for the Hamming and the rank-metric codes have been generalized for matroids and $q$-matroids, respectively.
 \begin{thm}[Wei-type duality \cite{GJ19}]\label{Wei:matroids}
	Let $M=([n], \rho)$ (resp. $M=(\Fq^n, \rho)$) be a matroid (resp. $q$-matroid) of rank $k$ and $M^*=([n], \rho^*)$ (resp. $M^*=(\Fq^n, \rho^*)$) be its dual. For $i =1, \ldots, n-k$ and for $j =1, \ldots, k$, if we let $d_i$ to be the $i$-th generalized weight of $M$ and  $d_j^{\perp}$ the $j$-th generalized weight for $M^*$, then we have
	\[
	\{1, 2, . . . , n\} = \{d^{\perp}_1, \ldots , d^{\perp}_k \}\cup\\
	\{n + 1 - d_1, \ldots, n+1-d_{n-k} \},
	\]
	where the union is disjoint. In particular, the generalized weights of a matroid (resp. $q$-matroid) $M$ uniquely
	determine those of $M^*$. 
\end{thm}

\section{The notion of sum-matroid}\label{Sec:3}

In this section, we introduce the notion of \emph{sum-matroid} using rank function and define its dual. We shall see how this notion generalizes both the notions of matroid and $q$-matroid. 

We follow the notations declared in Section \ref{Sec:2}. Recall that we use $\n$ and $\K$ to denote the $\ell$-tuples $(n_1,\ldots,n_{\ell})$ and $(K_1,\ldots,K_{\ell})$, respectively and $\K^{\n}$ to denote the $\ell$-tuple of vector spaces $(K_1^{n_1},\ldots,K_{\ell}^{n_\ell})$. Also, $\P(\K^{\n})$ is the cartesian product of lattices of subspaces as given in Definition \ref{def@7}. The \emph{sum-matroids} on $\K^{\n}$ are defined as follows.

\begin{defn}\label{def@14}
A \emph{sum-matroid} $M$ is a pair $(\K^{\n},\rho)$, where $\rho$ is a non-negative integer-valued function defined on $\P(\K^{\n})$, such that for any $\L, \L^{\prime} \in \P(\K^{\n})$, 
	\begin{enumerate}[(R1)]
		\item\label{R1} $0 \leq \rho(\L) \leq \text{Rk}(\L)$,
		\item\label{R2} if $\L \subseteq \L^{\prime}$, then $\rho(\L) \leq \rho(\L^{\prime})$,
		\item\label{R3} $\rho(\L + \L^{\prime})+ \rho(\L \cap \L^{\prime}) \leq \rho(\L)+\rho(\L^{\prime})$.
	\end{enumerate}
\end{defn}
 \begin{defn}
	Let $M=(\K^{\n},\rho)$ be a sum-matroid. An element $\L$ of $\P(\K^{\n})$ is called \emph{independent} if $\rho(\L)=\text{Rk}(\L)$, otherwise $\L$ is called \emph{dependent}. Moreover, if $\L$ is independent and $\rho(\L)=\rho(\K^{\n})$, we call $\L$ a \emph{basis} of $M$. The \emph{rank} of $M$, denoted by $\rho(M)$,  is defined to be $\rho(\K^{\n})$.
\end{defn}

The function $\rho$ in Definition \ref{def@14} is called \emph{rank function}. Note that the rank function is defined by three properties, i.e., bound on the rank function (R\ref{R1}), monotonicity (R\ref{R2}) and semimodularity (R\ref{R3}). 
Indeed, this resembles the definitions of rank function for matroids and $q$-matroids in Section \ref{Sec:2.2}.
For the rank function of matroids is defined on  $2^E$, where $E$ is a finite set, say, $[n]:=\{1, \ldots, n\}$ and it satisfies the same properties (R\ref{R1}), (R\ref{R2}) and (R\ref{R3}) with the only exception that in (R\ref{R1}), the rank  $\rho(A)$ of an element $A \subseteq E$ is bounded by $|A|$. 

On the other hand, the rank function for $q$-matroids is defined on $\P(\Fq^n)$, the lattice of subspaces of $\Fq^n$. In this case, the only difference is that  in (R\ref{R1}), the rank  $\rho(A)$ of an element $A \subseteq \Fq^n$ is bounded by $\dim(A)$. This explains the ``$q$" of $q$-matroid as we replace the finite sets and cardinality of a set by their $q$-analogues, i.e., finite dimensional vector spaces and the dimension of a space, respectively.

 Note that, for $\ell=1$, $\P(\K^{\n})$ becomes $\P(K_1^{n_1})$ which gives the domain of rank functions of $q$-matroids. On the other hand, taking $\n = (1,1, \ldots, 1)$, i.e., $n_i=1$ for $1\leq i\leq \ell$, $\P(\K^{\n})$ becomes $2^{[l]}$ which recovers the domain of rank functions of matroids.

\begin{defn}\label{def@16}
	Let $M = (\K^{\n},\,\rho)$ be a sum-matroid. The dual sum-matroid $M^*$ of $M$ is the sum-matroid $(\K^{\n},\, \rho^*)$, where for any $\L \in  \P(\K^{\n})$,
	\[
	\rho^*(\L) := \rho(\L^\perp) + \rm{ Rk } (\L) - \rho(\K^{\n}).
	\]
\end{defn}

We show next that $\rho^*$ is indeed a rank function on $\P(\K^{\n})$.

\begin{thm}
If $\rho$ is a rank function defined on $\P(\K^{\n})$, then $\rho^*$ is also a rank function on $\P(\K^{\n})$.
\end{thm}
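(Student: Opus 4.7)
The plan is to verify the three axioms (R\ref{R1}), (R\ref{R2}), (R\ref{R3}) for $\rho^*$ in turn, relying on two structural features of the cartesian product lattice $\P(\K^{\n})$: first, the orthogonal complement reverses inclusion and exchanges $+$ with $\cap$ componentwise; second, $\Rk$ satisfies the componentwise dimension formula $\Rk(\L+\L') + \Rk(\L \cap \L') = \Rk(\L) + \Rk(\L')$ together with the complement identity $\Rk(\L) + \Rk(\L^{\perp}) = \Rk(\K^{\n})$.

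For (R\ref{R1}), the upper bound $\rho^*(\L) \le \Rk(\L)$ reduces, after subtracting $\Rk(\L)$, to $\rho(\L^{\perp}) \le \rho(\K^{\n})$, which is immediate from (R\ref{R2}) applied to $\rho$. For the lower bound $\rho^*(\L) \ge 0$ I would feed the pair $\L, \L^{\perp}$ into the semimodular inequality (R\ref{R3}) for $\rho$: since $\L + \L^{\perp} = \K^{\n}$ and $\L \cap \L^{\perp} = \0$ (and $\rho(\0) = 0$ follows from $0 \le \rho(\0) \le \Rk(\0) = 0$), this yields $\rho(\K^{\n}) \le \rho(\L) + \rho(\L^{\perp}) \le \Rk(\L) + \rho(\L^{\perp})$, which is exactly $\rho^*(\L) \ge 0$.

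For (R\ref{R2}), the key lemma I need is a unit-increase estimate for $\rho$ itself: if $\L \subseteq \L'$ with $\Rk(\L') = \Rk(\L) + 1$, then $\rho(\L') \le \rho(\L) + 1$. This is what I expect to be the main obstacle, as it is not one of the axioms; however it drops out of semimodularity by taking $L$ to be the one-dimensional element of $\P(\K^{\n})$ (with the other components zero) that extends $\L$ to $\L'$: since $\rho(L) \le \Rk(L) = 1$ and $\rho(\L \cap L) = 0$, semimodularity yields $\rho(\L') \le \rho(\L) + 1$. Iterating along a maximal chain between $(\L')^{\perp} \subseteq \L^{\perp}$ then gives $\rho(\L^{\perp}) - \rho((\L')^{\perp}) \le \Rk(\L^{\perp}) - \Rk((\L')^{\perp}) = \Rk(\L') - \Rk(\L)$, which rearranges to $\rho^*(\L) \le \rho^*(\L')$.

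Finally, (R\ref{R3}) is a direct calculation. Using the perp-duality and the $\Rk$ dimension formula,
\[
\rho^*(\L + \L') + \rho^*(\L \cap \L') = \rho(\L^{\perp} \cap (\L')^{\perp}) + \rho(\L^{\perp} + (\L')^{\perp}) + \Rk(\L) + \Rk(\L') - 2\rho(\K^{\n}),
\]
and then applying (R\ref{R3}) for $\rho$ to the pair $\L^{\perp}, (\L')^{\perp}$ yields the desired inequality $\rho^*(\L + \L') + \rho^*(\L \cap \L') \le \rho^*(\L) + \rho^*(\L')$. In summary, once the unit-increase lemma is in hand, the dual axioms fall out from essentially symbolic lattice-theoretic manipulations.
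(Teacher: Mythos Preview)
Your proposal is correct and follows essentially the same route as the paper. For (R\ref{R3}) the two arguments are identical. For (R\ref{R2}), both use semimodularity together with the bound $\rho(\L_0)\le\Rk(\L_0)$ on a complement of $(\L')^{\perp}$ inside $\L^{\perp}$; the paper does this in a single step with a full complement $\L_0$ satisfying $(\L')^{\perp}+\L_0=\L^{\perp}$, whereas you state a unit-increase lemma and iterate along a maximal chain, which is the same idea unrolled. Your treatment of the lower bound in (R\ref{R1}) via semimodularity on the pair $\L,\L^{\perp}$ is in fact more explicit than the paper's, which only argues the upper bound $\rho^*(\L)\le\Rk(\L)$ directly; the lower bound there is implicit in the (R\ref{R2}) argument specialized to $\L=\0$.
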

\begin{proof}
    First we show that $\rho^*$ satisfies (R\ref{R2}). We let $\L\subseteq \L^{\prime}$. This implies $\L'^{\perp}\subseteq \L^{\perp}$, i.e., there exists $\L_0 \in \P(\K^{\n})$ such that $\L'^{\perp}\oplus \L_0=\L^{\perp}$ and $\rm{Rk}(\L_0)=\rm{Rk}(\L^{\perp})-\rm{Rk}(\L'^{\perp})$. Then
    \[
    \rho(\L^{\perp})=\rho(\L'^{\perp}+\L_0)\leq \rho(\L'^{\perp})+\rm{Rk}(\L_0)=\rho(\L'^{\perp})+\rm{Rk}(\L^{\perp})-\rm{Rk}(\L'^{\perp}), 
    \]
     where the inequality comes from property (R\ref{R3}) of $\rho$. Therefore,
    \begin{align}\label{eq::101}
        \rho(\L^{\perp})-\rm{Rk}(\L^{\perp})\leq \rho(\L'^{\perp})-\rm{Rk}(\L'^{\perp})
    \end{align}
    
    Since $\rm{Rk}(\L)+\rm{Rk}(\L^{\perp})=n$, then Equation \eqref{eq::101} becomes 
    \[
        \rho(\L^{\perp})+\rm{Rk}(\L)\leq \rho(\L'^{\perp})+\rm{Rk}(\L'), 
    \]
    and from definition of $\rho^*$, it follows that $\rho^*(\L)\leq \rho^*(\L')$, which proves (R\ref{R2}). 
    
    Now we show that $\rho^*$ satisfies (R\ref{R1}). Note that by definition, $\rho^*(\0)=0$ and $\0 \subseteq \L$ for any$\L \in \P(\K^{\n})$. Then (R\ref{R2}) of $\rho^*$ implies
    \[
    0= \rho^*(\0)\leq \rho^*(\L).
    \]
    
    Also, by property (R\ref{R2}) of $\rho$, given $\L \in \P(\K^{\n})$, we have $\rho(\L^{\perp}) \leq \rho(\K^{\n})$.
    Hence we have 
    \[
    \rm{Rk}(\L)+\rho(\L^{\perp})-\rho(\K^{\n})\leq \rm{Rk}(\L),
    \]
    i.e., $\rho^*(\L)\leq \rm{Rk}(\L)$ and that proves (R\ref{R1}) for $\rho^*$.

    Finally to prove (R\ref{R3}), let $\L,\L' \in \P(\K^{\n})$. Then 
    \begin{align*}
	 \rho^*(\L+\L')+\rho^*(\L\cap \L') & =\rho(\left(\L+\L'\right)^\perp) + \text{ Rk }(\L+\L') - 2\rho(\K^{\n}) \\
	&\qquad \quad + \rho(\left(\L\cap \L'\right)^\perp) + \text{ Rk }(\L\cap \L') \\
	& =\rho(\left(\L+\L'\right)^\perp) + \text{ Rk }(\L)+\text{ Rk }(\L') - 2\rho(\K^{\n}) \\
	&\qquad \quad + \rho(\left(\L\cap \L'\right)^\perp).
	\end{align*}
	But $(\L+\L')^\perp =\L^\perp \cap \L'^\perp$ and $(\L\cap\L')^\perp =\L^\perp + \L'^\perp$. Hence
	\begin{align*}
	& \rho^*(\L+\L')+\rho^*(\L\cap \L') \\
	&\qquad =\rho(\L^\perp \cap \L'^\perp) + \text{ Rk }(\L)+\text{ Rk }(\L') - 2\rho(\K^{\n}) \\
	&\qquad \quad + \rho(\L^\perp + \L'^\perp) \\
	&\qquad \leq \rho(\L^\perp)+\rho(\L'^\perp) + \text{ Rk }(\L)+\text{ Rk }(\L') - 2\rho(\K^{\n}) \\
	&\qquad = \rho^*(\L) + \rho^*(\L').
	\end{align*}
	And this concludes the proof.
\end{proof}

Note that the definition of dual sum-matroid in Definition \ref{def@16} naturally  generalizes the respective definitions for matroids and $q$-matroids as mentioned in Section \ref{Sec:3}.



\section{Sum-matroids from sum-rank metric codes}\label{Sec:4}
We shall now associate a sum-matroid to a sum-rank metric code $\C \subseteq \F^{n}$ and show how duality is preserved in this association. To do that, we first define some subspaces of the dual sum-rank metric code $\C^\perp$.

 \begin{defn}\label{defn:36}
	Let $\C \subseteq \F^{n}$ be an $\F$-linear subspace and  $\L \in \P(\K^{\n})$. We define:
\[
\C(\L):=\{\c\in \C^\perp :\rm{supp}(\c) \subseteq \L^\perp\}.
\]
\end{defn}

Note that from Definitions \ref{def@11} and \ref{defn:36} it follows that

\begin{equation}\label{rem}
    \C(\L) = \Nu_{\L^{\perp}} \cap \C^\perp.
\end{equation} 
Next we give a characterization of $\C(\L)$ that will be useful in the sequel. 

\begin{lem}\label{23}
	Let $\C$ be an $\F$-linear code of length $n$ and  $\L \in \P(\K^{\n})$. Then $\c \in \C(\L)$ if and only if $\c \cdot \y=0$ for all  $\y \in \L$, where $\c \cdot \y$ is the usual dot product in $\F^n$. Furthermore, $\C(\L)$ is an $\F$-linear subspace of $\C^\perp$.
\end{lem}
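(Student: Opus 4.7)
The plan is to prove the biconditional by translating the sum-rank support condition $\text{supp}(\c)\subseteq \L^\perp$ (which, alongside $\c\in\C^\perp$, defines membership in $\C(\L)$) into a componentwise orthogonality statement, and then use the basis expansion underlying the coordinate matrix $\Gamma_i$ to connect each $\F$-valued dot product to the $K_i$-row space of $\Gamma_i(\c^{(i)})$. Write $\c=(\c^{(1)},\dots,\c^{(\ell)})$ and, for $\y=(\y^{(1)},\dots,\y^{(\ell)})\in\L$ with $\y^{(i)}\in\L_i\subseteq K_i^{n_i}\subseteq\F^{n_i}$, observe that $\c\cdot\y=\sum_{i=1}^{\ell}\c^{(i)}\cdot\y^{(i)}$. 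By setting all but one component of $\y$ equal to zero, the global vanishing $\c\cdot\y=0$ for all $\y\in\L$ is equivalent to the componentwise vanishing $\c^{(i)}\cdot\y^{(i)}=0$ for every $\y^{(i)}\in\L_i$ and every $i$.

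The heart of the argument is the next reduction. Expanding $c^{(i)}_k=\sum_{j=1}^{m_i}\lambda^{(i)}_{j,k}\alpha^{(i)}_j$ in the basis $\Gamma_i$, the rows $r^{(i)}_j=(\lambda^{(i)}_{j,1},\dots,\lambda^{(i)}_{j,n_i})\in K_i^{n_i}$ are exactly the rows of $\Gamma_i(\c^{(i)})$, and a direct computation gives
\[
\c^{(i)}\cdot\y^{(i)}=\sum_{j=1}^{m_i}\alpha^{(i)}_j\,(r^{(i)}_j\cdot\y^{(i)}).
\]
Since $r^{(i)}_j\cdot\y^{(i)}\in K_i$ and $\{\alpha^{(i)}_j\}$ is a $K_i$-basis of $\F$, the vanishing of this sum is equivalent to $r^{(i)}_j\cdot\y^{(i)}=0$ for all $j$. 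Thus $\c^{(i)}\cdot\y^{(i)}=0$ for every $\y^{(i)}\in\L_i$ if and only if each row $r^{(i)}_j$ lies in $\L_i^\perp$, i.e. $\text{supp}(\c^{(i)})=\text{rowsp}(\Gamma_i(\c^{(i)}))\subseteq\L_i^\perp$. Assembling these across $i$ yields $\text{supp}(\c)\subseteq\L^\perp$, which together with the standing hypothesis $\c\in\C^\perp$ is precisely the condition $\c\in\C(\L)$.

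For the final assertion, the set $\L^{\sharp}:=\{\c\in\F^n:\c\cdot\y=0\text{ for all }\y\in\L\}$ is clearly an $\F$-linear subspace of $\F^n$, being the intersection of the $\F$-linear hyperplanes $\y^\perp$ for $\y$ ranging over (a generating set of) $\L$. By the biconditional just established, $\C(\L)=\C^\perp\cap\L^{\sharp}$, which is then an $\F$-linear subspace of $\C^\perp$. The only delicate point to watch is the interplay between the $K_i$-linear nature of $\L_i$ and the $\F$-linearity of the ambient code, which is handled cleanly by noting that $\L_i\subseteq K_i^{n_i}\subseteq\F^{n_i}$ so that dot products with elements of $\L$ make sense in $\F$; no further subtlety arises.
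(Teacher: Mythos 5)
Your proof is correct and follows essentially the same route as the paper's: reduce membership in $\C(\L)$ to the componentwise condition $\mathrm{supp}(\c^{(i)})\subseteq\L_i^{\perp}$, identify that with $\c^{(i)}\cdot\y^{(i)}=0$ for all $\y^{(i)}\in\L_i$, and conclude the subspace claim by writing $\C(\L)$ as the intersection of $\C^\perp$ with an $\F$-linear space (the paper writes it as $\Nu_{\L^\perp}\cap\C^\perp$, which is your $\L^{\sharp}\cap\C^\perp$). The only difference is that you spell out the basis-expansion computation $\c^{(i)}\cdot\y^{(i)}=\sum_j\alpha^{(i)}_j\bigl(r^{(i)}_j\cdot\y^{(i)}\bigr)$ that the paper treats as immediate, which is a welcome addition rather than a deviation.
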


\begin{proof}
	Let $\c=(\c^{(1)}, \ldots, \c^{(\ell)}) \in \C^\perp$ and  $\text{supp}(\c)=(E_1, \ldots , E_{\ell})$. Then  
	\begin{align*}
	\c \in \C(\L)  &\iff \text{supp}(\c) \in \L^{\perp}\\
	&\iff E_i \subseteq \L_i^{\perp} \; for \; all \; i=1,\ldots,\ell.\\
	&\iff \text{supp}(\c^{(i)}) \subseteq \L_i^{\perp} \; for \; all \; i=1,\ldots,\ell.\\
	&\iff \c^{(i)} \cdot \y^{(i)}=0 \; for \; all \; \y^{(i)} \in \L_{i}\\
	&\iff \c \cdot \y =0 \; for \, all \; \y \in \L.
	\end{align*}
	
	The converse in the very last equivalence, i.e., $\c \cdot \y =0 \text{ for all } \y \in \L \Longrightarrow \c^{(i)} \cdot \y^{(i)}=0 \text{ for all } \y^{(i)} \in \L_{i}$ follows from the particular choices of $\y=(\0,\dots,\0,\y^{(i)},\0,\dots,\0) \in \L$.
	
	Finally, that $\C(\L)$ is an $\F$-linear subspace of $\F^{n}$ follows from the observation that $\C(\L)=\Nu_{\L^{\perp}} \cap \C^\perp$, which is intersection of two $\F$-linear subspaces.
\end{proof}

From the above lemma we can alternatively write 
\begin{equation}\label{eq::2}
\C(\L)=\{\c \in \C^\perp:  \c \cdot \y =0  \text{ for } \text{all }  \y \in \L \}.
\end{equation}

\begin{defn}\label{def@38}
Let $\L =(\L_1,\ldots,\L_{\ell})$ be an element of $\P(\K^{\n})$ of rank $N$ with $\dim_{K_i}(\L_i)=N_i$ for $1\leq i \leq \ell$, where $N= N_1 + \cdots + N_{\ell}$. Let $\AA_i \in M_{ N_i \times n_i}(K_i)$ be a generator matrix of $\L_i$, for $1\leq i \leq \ell$ and $\AA=\diag(\AA_1,\ldots,\AA_{\ell})$, a block diagonal matrix. Define the $\F$-linear map 
\begin{align}\label{piL}
  \nonumber  \Pi_{\L} \colon\F^n &\longrightarrow \F^N\\ \x &\longmapsto \x \AA^T.
\end{align}
\end{defn}

Next we define another subspace of $\F^n$ related to $\C$ which will be used to define the rank function of a sum-matroid. 
\begin{defn}\label{def@18}
Let $\C \subseteq \F^{n}$ be an $\F$-linear code and $\C^{\perp}$ be its dual. Then for any $\L \in \P(\K^{\n})$, we define 
		   \begin{align}\label{CL}
		   \C_{\L}=\Pi_{\L}(\C^\perp).
		  \end{align}
\end{defn}		   
Note that, $\text{ker}(\Pi_{\L}) \cap \C^\perp=\{\c \in \C^\perp:  \c \cdot \y =0 \; \text{for} \; \text{all} \; \y \in \L \}$, which is equal to $C(\L)$ from equation \eqref{eq::2}. So from the definition of $\C_{\L}$ in Definition ~\ref{def@18}, we obtain a short exact sequence of vector spaces over the field $\F$,
	\begin{equation}\label{eq::3}
	0 \longrightarrow \C(\L) \longrightarrow \C^\perp \overset{\Pi_{\L}|_{\C^\perp}}{\longrightarrow} \C_{\L} \longrightarrow 0.
	\end{equation}

\begin{defn}\label{def@19}
	Let $\C$ be an $[n,k,d;n_1,\dots,n_{\ell}]$-sum-rank metric code over $(\F;K_1,\dots,K_{\ell})$ and $\L \in \P(\K^{\n})$. For any $\L \in \P(\K^{\n})$ we define 
	\[
	\mu_{\C}(\L):=\dim_{\F}(\C(\L)) \text{ and } \rho_{\C}(\L):= \dim_{\F}(\C_{\L}).
	\]
	\end{defn}

From the short exact sequence in \eqref{eq::3}, we obtain the following lemma.

\begin{lem}\label{27}
	Let $\C$ be an $[n,k,d;n_1,\dots,n_{\ell}]$-sum-rank metric code over 
	\linebreak $(\F;K_1,\dots,K_{\ell})$ and $\L \in \P(\K^{\n})$. Then  $\mu_{\C}(\L) +  \rho_{\C}(\L)=\dim \C^{\perp} =n-k$.
\end{lem}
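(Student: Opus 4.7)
The plan is to exploit the short exact sequence displayed in equation \eqref{eq::3} just before the lemma statement, and apply rank--nullity. The two terms $\mu_{\C}(\L)$ and $\rho_{\C}(\L)$ are, by Definition~\ref{def@19}, the $\F$-dimensions of the kernel and the image of the restricted map $\Pi_{\L}|_{\C^\perp}\colon \C^\perp \to \C_{\L}$, so the heart of the argument is already encoded in that exact sequence.

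First, I would recall from Lemma~\ref{23} and the discussion immediately following it that
\[
\ker(\Pi_{\L}|_{\C^\perp}) = \C(\L),
\]
so $\dim_{\F}\ker(\Pi_{\L}|_{\C^\perp}) = \mu_{\C}(\L)$. Second, by the very definition of $\C_{\L} = \Pi_{\L}(\C^\perp)$ in Definition~\ref{def@18}, the image of $\Pi_{\L}|_{\C^\perp}$ is $\C_{\L}$, whose $\F$-dimension is $\rho_{\C}(\L)$. Applying the rank--nullity formula (equivalently, additivity of dimension along the short exact sequence \eqref{eq::3}) yields
\[
\dim_{\F}(\C^\perp) = \mu_{\C}(\L) + \rho_{\C}(\L).
\]

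Finally, since $\C$ is an $\F$-linear subspace of $\F^{n}$ of dimension $k$, its orthogonal complement $\C^\perp$ has $\F$-dimension $n-k$, giving the desired equality $\mu_{\C}(\L) + \rho_{\C}(\L) = n-k$. There is no real obstacle here; the only thing to check carefully is that the sequence \eqref{eq::3} truly is exact as an $\F$-vector space sequence, which follows because $\Pi_{\L}$ is $\F$-linear and its kernel when restricted to $\C^\perp$ coincides with $\C(\L)$ via the characterization in \eqref{eq::2}.
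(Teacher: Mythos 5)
Your proof is correct and follows exactly the route the paper intends: the lemma is stated as an immediate consequence of the short exact sequence \eqref{eq::3}, and your application of rank--nullity together with $\dim_{\F}\C^\perp = n-k$ is precisely that argument spelled out. Nothing is missing.
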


Now we come to our main result of this section. We show that a \emph{sum-matroid} can be associated to a \emph{sum-rank metric} code $\C$ by proving that $\rho_{\C}$ gives a rank function on $\P(\K^{\n})$.

\begin{thm}\label{54}
	Let $\C$ be an $[n,k,d;n_1,\dots,n_{\ell}]$-sum-rank metric code over $(\F;K_1,\dots,K_{\ell})$ and $\rho_{\C}$ be the function as given in Definition \ref{def@19}. Then $(\K^{\n},\rho_{\C})$ defines a sum-matroid.
\end{thm}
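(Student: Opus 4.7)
The plan is to verify the three rank axioms (R\ref{R1}), (R\ref{R2}), (R\ref{R3}) for $\rho_{\C}$ directly from its definition as $\dim_\F \C_\L$. Axiom (R\ref{R1}) is essentially immediate: $\C_\L = \Pi_\L(\C^\perp)$ is by construction a subspace of $\F^N$ with $N = \Rk(\L)$, so $0 \leq \rho_\C(\L) \leq \Rk(\L)$.

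For monotonicity (R\ref{R2}), the cleanest route is to dualize through the companion function $\mu_\C$ and apply Lemma~\ref{27}, which gives $\rho_\C(\L) = (n-k) - \mu_\C(\L)$. Since $\mu_\C(\L) = \dim_\F \C(\L)$ with $\C(\L) = \{\c \in \C^\perp : \rm{supp}(\c) \subseteq \L^\perp\}$, if $\L \subseteq \L'$ then $\L'^\perp \subseteq \L^\perp$, so $\C(\L') \subseteq \C(\L)$ and $\mu_\C(\L') \leq \mu_\C(\L)$. Taking complements via $(n-k)$ inverts the inequality and yields $\rho_\C(\L) \leq \rho_\C(\L')$.

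The substance of the argument is (R\ref{R3}), which I would also establish through $\mu_\C$ by proving the supermodular inequality
\[
\mu_\C(\L) + \mu_\C(\L') \leq \mu_\C(\L+\L') + \mu_\C(\L\cap\L'),
\]
and then converting it by $\rho_\C = (n-k) - \mu_\C$. The key compatibility facts are that the support-space operator respects the lattice structure of $\P(\K^\n)$: the identity $\Nu_{\L_1\cap\L_2} = \Nu_{\L_1}\cap\Nu_{\L_2}$ is immediate from Definition~\ref{def@11}, and $\Nu_{\L_1+\L_2} = \Nu_{\L_1}+\Nu_{\L_2}$ follows from the obvious inclusion $\supseteq$ together with a dimension count via \eqref{eq:::2} and the componentwise Grassmann identity $\Rk(\L_1+\L_2)+\Rk(\L_1\cap\L_2) = \Rk(\L_1)+\Rk(\L_2)$. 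Combined with $(\L+\L')^\perp = \L^\perp \cap \L'^\perp$ and $(\L\cap\L')^\perp = \L^\perp + \L'^\perp$, these give $\C(\L+\L') = \C(\L)\cap\C(\L')$ and $\C(\L\cap\L') \supseteq \C(\L)+\C(\L')$. The standard dimension formula $\dim(U\cap V) + \dim(U+V) = \dim U + \dim V$ applied to $U=\C(\L),\, V=\C(\L')$ inside $\C^\perp$ then produces the supermodular inequality for $\mu_\C$, and hence (R\ref{R3}) for $\rho_\C$.

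The main obstacle I anticipate is establishing $\Nu_{\L_1+\L_2} = \Nu_{\L_1}+\Nu_{\L_2}$; the inclusion $\supseteq$ is formal from subadditivity of supports, but the reverse requires the dimension identity \eqref{eq:::2} of Mart\'inez-Pe\~nas. Once these lattice-compatibility properties of $\Nu_{(\cdot)}$ are in hand, the remainder of the proof is routine bookkeeping with the short exact sequence \eqref{eq::3} and Lemma~\ref{27}.
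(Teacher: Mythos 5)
Your proposal is correct and follows essentially the same route as the paper: verify (R1) from $\C_{\L}\subseteq\F^{N}$, and reduce (R2) and (R3) to the supermodularity of $\mu_{\C}$ via Lemma~\ref{27}, using $\C(\L+\L')=\C(\L)\cap\C(\L')$, $\C(\L\cap\L')\supseteq\C(\L)+\C(\L')$, and the modularity of dimension. One small remark: the obstacle you anticipate is not actually one, since only the trivial inclusion $\Nu_{\L_1}+\Nu_{\L_2}\subseteq\Nu_{\L_1+\L_2}$ enters the argument (the reverse inclusion, and hence the dimension count via \eqref{eq:::2}, is never needed); the paper sidesteps $\Nu$ entirely by arguing directly with $\L\subseteq\mathrm{supp}(\c)^{\perp}$.
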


\begin{proof}
	First of all it is clear that $\rho_{\C}$ is a non-negative integer valued function defined on $\P(\K^{\n})$. We need to show that $\rho_{\C}$ satisfies the properties (R\ref{R1}), (R\ref{R2}), (R\ref{R3}) of Definition \ref{def@14}. Let $\L,\L^{\prime}\in \P(\K^{\n})$. Lemma \ref{27} implies that $\rho_{\C}(\L)=n-k-\mu_{\C}(\L)$. 
	\begin{enumerate}[(R1)]
		\item $0 \leq \rho_{\C}(\L) \leq \text{Rk}(\L)$:
		
	 By definition \ref{def@19}, $\rho_{\C}(\L)= \text{dim}_{\F}(\C_{\L})$ and from Definitions \ref{def@38} and \ref{def@18}, we know that $\C_{\L}$ is a subspace of $\F^{N}$ if $\text{Rk}(\L)=N$. This proves (R\ref{R1}).
	
		\item If $\L \subseteq \L^{\prime}$, then $\rho_{\C}(\L) \leq \rho_{\C}(\L^{\prime})$:
	
	Let $\L \subseteq \L^{\prime}$ and $\c \in \C(\L^{\prime})$. Then $\L \subseteq \L^{\prime} \subseteq \text{supp}(\c)^{\perp}$. So $\c \in \C(\L)$. Hence $\C(\L^{\prime}) \subseteq \C(\L)$ and $\mu_{\C}(\L^{\prime}) \leq \mu_{\C}(\L)$. Therefore, by Lemma \ref{27}, $\rho_{\C}(\L) \leq \rho_{\C}(\L^{\prime})$.

	\item  $\rho_\C(\L + \L^{\prime})+ \rho_\C(\L \cap \L^{\prime}) \leq \rho_{\C}(\L)+\rho_{\C}(\L')$:
	
	First note that $\C(\L) \cap \C(\L')=\C(\L + \L^{\prime})$, since
	\begin{align*}
	\c \in  \C(\L) \cap \C(\L') &\iff \c \in \C(\L) \;\text{and} \; \c \in C(\L')\\
	&\iff \L \subseteq \text{supp}(\c)^{\perp} \; \text{and} \; \L' \subseteq \text{supp}(\c)^{\perp}\\
	&\iff \L + \L^{\prime} \subseteq \text{supp}(\c)^{\perp}\\
	&\iff \c \in \C(\L +\L').
	\end{align*}
	
	Now $\L \cap \L^{\prime} \subseteq \L$ implies that $\C(\L) \subseteq \C(\L \cap \L^{\prime})$ and similarly, $\C(\L^{\prime}) \subseteq \C(\L \cap \L^{\prime})$. Since $\C(\L \cap \L^{\prime})$ is a subspace of $\C^\perp$, it follows that  $\C(\L)+\C(\L^{\prime}) \subseteq \C(\L \cap \L^{\prime})$. Following these observations, finally we note that
	\begin{align*}
	\mu_\C(\L)+ \mu_\C(\L^{\prime})&=\dim\C(\L) + \dim\C(\L^{\prime})\\
	&=\dim(\C(\L)+\C(\L^{\prime})) + \dim(\C(\L) \cap \C(\L^{\prime}))\\
	& \leq \dim(\C(\L \cap \L^{\prime})) + \dim(\C(\L + \L^{\prime}))\\
	&=\mu_\C(\L \cap \L^{\prime}) + \mu_\C(\L + \L^{\prime}).
	\end{align*}
	Therefore, Lemma \ref{27} implies that $\rho_\C(\L + \L^{\prime})+ \rho_\C(\L \cap \L^{\prime}) \leq \rho_{\C}(\L)+\rho_{\C}(\L^{\prime})$.
	\end{enumerate}
	Thus it completes the proof that $\rho_\C$ is a rank function on $\P(\K^{\n})$ and hence we conclude that $(\K^{\n}, \rho_\C)$ is indeed a \emph{sum-matroid}.
\end{proof}
We call the sum-matroid $(\K^{\n},\, \rho_{\C})$ of Theorem \ref{54} to be the \emph{sum-matroid associated to the sum-rank metric code $\C$} and we denote this sum-matroid by $M_{\C}$.

\textbf{Note}: For the rest of the paper, whenever we write $M_\C$, we mean the sum-matroid associated to the sum-rank metric code $\C$.
\begin{cor}\label{cor:::1}
	The rank of the sum-matroid $M_{\C}$ is $\dim\C^\perp$.
\end{cor}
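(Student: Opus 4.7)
The plan is to compute $\rho_\C(\K^{\n})$ directly from the definitions, since the rank of the sum-matroid $M_\C$ is by definition $\rho_\C(\K^{\n})$. I would approach this in two short steps, using either Lemma~\ref{27} or the defining map $\Pi_\L$; both routes are essentially one-line observations.

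The cleanest route is via Lemma~\ref{27}, which asserts $\mu_\C(\L) + \rho_\C(\L) = n-k$ for every $\L \in \P(\K^{\n})$. Specializing to $\L = \K^{\n}$, note that $(\K^{\n})^\perp$ is the zero element of $\P(\K^{\n})$ (each component is the zero subspace of $K_i^{n_i}$). By the defining condition $\C(\L) = \{\c \in \C^\perp : \mathrm{supp}(\c) \subseteq \L^\perp\}$, a codeword $\c$ lying in $\C(\K^{\n})$ must have $\mathrm{supp}(\c)$ equal to the zero tuple, which forces $\c = \0$. Hence $\mu_\C(\K^{\n}) = 0$, so Lemma~\ref{27} gives $\rho_\C(\K^{\n}) = n-k = \dim_\F \C^\perp$, as required.

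Alternatively, one can argue directly from Definition~\ref{def@18}: for $\L = \K^{\n}$, each $\L_i = K_i^{n_i}$ admits the identity $I_{n_i}$ as a generator matrix, so $\AA = \diag(I_{n_1},\ldots,I_{n_\ell}) = I_n$ and the map $\Pi_{\K^{\n}} : \F^n \to \F^n$ is the identity. Therefore $\C_{\K^{\n}} = \Pi_{\K^{\n}}(\C^\perp) = \C^\perp$, and $\rho_\C(\K^{\n}) = \dim_\F \C^\perp$.

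There is really no obstacle here; the statement is a direct corollary of Theorem~\ref{54} together with the observation that the top element of $\P(\K^{\n})$ has trivial orthogonal complement. The only thing worth being careful about is confirming that $(\K^{\n})^\perp$ really is the zero tuple in $\P(\K^{\n})$, which follows from the componentwise definition of the complement operation on the product lattice $\P(K_1^{n_1}) \times \cdots \times \P(K_\ell^{n_\ell})$.
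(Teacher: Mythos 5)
Your proof is correct and follows essentially the same route as the paper: the paper also observes that $\C(\K^{\n})=\{\0\}$ (since the top element of $\P(\K^{\n})$ has zero orthogonal complement) and then reads off $\C_{\K^{\n}}\simeq\C^\perp$ from the exact sequence \eqref{eq::3}, of which your invocation of Lemma~\ref{27} is just the dimension-count form. Your alternative observation that $\Pi_{\K^{\n}}$ is the identity is a slightly more direct way to reach the same conclusion.
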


\begin{proof}
Clearly, $\C(\K^\n) = \{\0\}$. Hence in view of \eqref{eq::3}, $\C_{\K^\n} \simeq \C^\perp$ and so $\text{rank}(M_\C)$ = $\rho_\C(\K^{\n}) = \dim \C_{\K^{\n}} = \dim \C^\perp.$
\end{proof}

The above construction of sum-matroids from sum-rank metric codes can be written alternatively as follows.
Let $\C$ be a $[n,k;n_1,\dots,n_{\ell}]$-sum-rank metric code over $(\F;\K_1,\dots,\K_{\ell})$. Suppose that $\H=[\H_1|\cdots|\H_{\ell}]$ is a parity check matrix of $\C$ such that $\H_i$ has $n_i$ columns. 

 For $\L = (\L_1,\dots,\L_{\ell}) \in \PKn$, we define
\begin{equation}\label{eq:5}
	 \tilde{\rho}_\C(\L) = \dim_{\F} \bigoplus_{i=1}^{\ell} \< \H_i \x^T : \x\in \L_i \>_{\F},
\end{equation}
where $\< \H_i \x^T : \x\in \L_i \>_{\F}$ is the $\F$-linear space generated by the vectors $\H_i \x^T$ where $\x$ runs through the elements of $\L_i$.
	Now note that for all $\L \in \PKn$, $\tilde{\rho}_\C(\L) = \rho_\C(\L)$ since both are equal to rank($\H \AA^T$), where $\AA_i \in M_{ N_i \times n_i}(K_i)$ is a generator matrix of $\L_i$ for $1 \leq i \leq \ell$ and $\AA=\diag(\AA_1,\ldots,\AA_{\ell})$.

This construction of sum-matroids from sum-rank metric codes generalizes the constructions of matroids (when  $\n = (1,\ldots, 1)$) and $q$-matroids (when $\ell=1$) in Section \ref{Sec:2.2}.

 The advantage of the construction of $\tilde{\rho_{\C}}$ is that it captures the idea that the rank function $\rho$ of sum-matroids are abstraction of the notion of dependence in vector spaces. Indeed, the rank function $\tilde{\rho}_\C$ in Equation \eqref{eq:5} measures the linear dependency over $\F$ of vectors which are coming from blocks of $K_i$-subspaces.


The following result gives a natural connection between the duality of sum-rank metric codes and the duality of sum-matroids.

\begin{thm}\label{210}
 Let $\rho_\C$ be the rank function of the sum-matroid associated to a sum-rank metric code $\C$. Then the rank function of the dual of the sum-matroid is the same as the rank function of the sum-matroid associated to the dual of $\C$, i.e., $\rho^*_\C = \rho_{\C^\perp}$. 
\end{thm}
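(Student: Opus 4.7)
The plan is to combine two applications of Lemma \ref{27} with the duality identity $\Nu_\L^{\perp} = \Nu_{\L^\perp}$ (which is the lattice isomorphism of Proposition 1 in \cite{MP2019} alluded to in the introduction). First, I would unfold the definitions. By Definition \ref{def@16} and Corollary \ref{cor:::1},
\[
\rho_\C^*(\L) = \rho_\C(\L^\perp) + \Rk(\L) - \rho_\C(\K^\n) = \rho_\C(\L^\perp) + \Rk(\L) - (n-k).
\]
Applying Lemma \ref{27} to $\C$ gives $\rho_\C(\L^\perp) = n - k - \mu_\C(\L^\perp)$, and applying Lemma \ref{27} to the code $\C^\perp$ (which has dimension $n-k$, so the formula becomes $\mu_{\C^\perp} + \rho_{\C^\perp} = k$) yields $\rho_{\C^\perp}(\L) = k - \mu_{\C^\perp}(\L)$. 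Substituting, the desired equality $\rho_\C^*(\L) = \rho_{\C^\perp}(\L)$ reduces to the single claim
\[
\mu_{\C^\perp}(\L) - \mu_\C(\L^\perp) = k - \Rk(\L).
\]

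Next I would translate both quantities into dimensions of intersections. By Lemma \ref{23}, $\C(\L^\perp) = \C^\perp \cap \Nu_\L$, so $\mu_\C(\L^\perp) = \dim_\F(\C^\perp \cap \Nu_\L)$. The analogous statement for the code $\C^\perp$ reads $\mu_{\C^\perp}(\L) = \dim_\F(\C \cap \Nu_{\L^\perp})$. The key structural input is the duality
\[
\Nu_\L^{\perp} = \Nu_{\L^\perp},
\]
taken componentwise and verified via the fact that, for each $i$, the $\F$-space $\{c^{(i)} \in \F^{n_i} : \supp_i(c^{(i)}) \subseteq \L_i\}$ is spanned over $\F$ by a $K_i$-basis of $\L_i$, so its $\F$-orthogonal complement in $\F^{n_i}$ coincides with the corresponding space attached to $\L_i^\perp$. (This is already implicit in \cite{MP2019} and dimensions match via \eqref{eq:::2}.)

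With this identity in hand, I would finish by a standard dimension count in $\F^n$. Since $(\C^\perp \cap \Nu_\L)^\perp = \C + \Nu_\L^\perp = \C + \Nu_{\L^\perp}$, we get
\[
\dim(\C^\perp \cap \Nu_\L) = n - \dim(\C + \Nu_{\L^\perp}) = n - \dim\C - \dim\Nu_{\L^\perp} + \dim(\C \cap \Nu_{\L^\perp}).
\]
Using $\dim\C = k$ and $\dim\Nu_{\L^\perp} = \Rk(\L^\perp) = n - \Rk(\L)$ from \eqref{eq:::2}, this rearranges exactly to $\dim(\C \cap \Nu_{\L^\perp}) - \dim(\C^\perp \cap \Nu_\L) = k - \Rk(\L)$, which is the displayed identity.

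The only genuinely nontrivial ingredient is the duality $\Nu_\L^\perp = \Nu_{\L^\perp}$; everything else is bookkeeping with Lemma \ref{27}, Corollary \ref{cor:::1}, and the Grassmann formula. Once that identity is invoked, the argument is a short linear-algebra computation.
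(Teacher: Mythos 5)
Your proof is correct. The overall skeleton matches the paper's: both reduce the claim, via Definition \ref{def@16}, Corollary \ref{cor:::1}, and Lemma \ref{27}, to the single symmetric identity $\dim(\C\cap\Nu_{\L^\perp})-\dim(\C^\perp\cap\Nu_{\L})=k-\Rk(\L)$. Where you diverge is in how that identity is established. The paper picks generator matrices $G$ of $\C$ and $G_1$ of $\Nu_\L$ and uses the transpose trick $\rank(GG_1^T)=\rank(G_1G^T)$ to equate $\dim\C-\dim(\C\cap\Nu_{\L^\perp})$ with $\dim\Nu_\L-\dim(\Nu_\L\cap\C^\perp)$; you instead make the orthogonality relation $\Nu_\L^\perp=\Nu_{\L^\perp}$ explicit and then apply $(U\cap V)^\perp=U^\perp+V^\perp$ together with the Grassmann formula. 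These are two faces of the same linear-algebra fact --- indeed the paper's identification of $\ker(\Pi_\L|_\C)$ with $\C\cap\Nu_{\L^\perp}$ already uses $\ker\Pi_\L=\Nu_\L^\perp=\Nu_{\L^\perp}$ implicitly --- but your version isolates that relation as the one genuinely nontrivial input and keeps the rest coordinate-free, which is arguably cleaner and makes clearer why the lattice anti-isomorphism $\L\mapsto\L^\perp$ from \cite{MP2019} is the engine of the duality. Your justification of $\Nu_\L^\perp=\Nu_{\L^\perp}$ (via Lemma \ref{23} and the dimension count \eqref{eq:::2}) is sound, so there is no gap.
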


\begin{proof}

	Let $\L \in \PKn$. From Definition \ref{def@16}, $\rho^*_\C(\L) = \rho_{\C}(\L^{\perp}) + \Rk(\L) - \rho_{\C}(\K^{\n})$.
	Considering the restriction of the $\F$-linear map $\Pi_{\L}$ on $\C$, we get
	\begin{align}\label{eq::37}
	\dim({\C}^{\perp}_{\L}) &= \dim(\Pi_{\L}(\C)) = \dim(\C)-\dim(\C \cap \Nu_{\L^\perp}).
	\end{align}
	Suppose $G$ and $G_1$ are the generator matrices of $\C$ and $\Nu_{\L}$ of rank $k$ and $k_1$, respectively. Consider the following linear maps
	\begin{align*}
	\Phi: &\Nu_{\L} \longrightarrow \F^k \quad \quad\quad \quad \Psi: \C \longrightarrow \F^{k_1}\\
	      &x \mapsto xG^{T}, \quad \quad \quad \quad \quad \quad x \mapsto xG_1^{T}.
    \end{align*}
    Observe that $\dim(\C) - \dim(\C \cap \Nu_{\L^\perp}) = \dim(\Psi(\C)) = \text{rank}(GG_1^T) = \\ \text{rank}(G_1G^T)= \dim(\Phi(\Nu_{\L})) = \dim(\Nu_{\L}) - \dim(\Nu_{\L} \cap \C^\perp)$. Now combining it with Equations \eqref{eq::37} and \eqref{eq:::2}, we obtain 
    \begin{equation}
	\dim({\C}^{\perp}_{\L}) = \Rk(\L) - \dim({\C^\perp} \cap \Nu_{\L}).
	\end{equation}
	
	Now since $\rho_{\C}(\K^{\n}) = \dim{\C^\perp}$ and $\dim({\C^\perp} \cap \Nu_{\L}) = \dim\C(\L^{\perp})$ (from Equation \eqref{rem}), we see that
	\begin{align*}
	\rho^*_\C(\L) &= \rho_{\C}(\L^{\perp}) + \Rk(\L) - \rho_{\C}(\K^{\n})\\
	&= \rho_{\C}(\L^{\perp}) + \dim({\C}^{\perp}_{\L}) + \dim({\C^\perp} \cap \Nu_{\L}) - \dim{\C^\perp}\\
	&= \dim({\C}^{\perp}_{\L}) + \dim\C(\L^{\perp}) - (\dim{\C^\perp} - \rho_{\C}(\L^{\perp}))\\
	&= \dim(\C^\perp_{\L}) \\
	&= \rho_{\C^\perp}(\L).
	\end{align*}
	 Thus it proves that $\rho_\C^* = \rho_{\C^\perp}$.
\end{proof}
 
 The version of the previous theorem for the particular cases of matroids and $q$-matroids can be found in \cite{BJMS12} and \cite{JP2016}, respectively.

\section{Generalized weights of sum-matroids}\label{Sec:5}
This section deals with the extension of the notion of generalized weights for sum-matroids so that the sum-rank weights of sum-rank metric codes appear as a special case. We prove the monotonicity and Wei-type duality theorem for the generalized weights of a sum-matroid and these can be seen as generalization of the analogous results for matroids and $q$-matroids.
 In this section also we use $\n$ and $\K$ to denote the $\ell$-tuples $(n_1,\ldots,n_{\ell})$ and $(K_1,\ldots,K_{\ell})$, respectively and $\K^{\n}$ to  denote the $\ell$-tuple of vector spaces $(K_1^{n_1},\ldots,K_{\ell}^{n_\ell})$.

We have seen in Section \ref{Sec:2}, the notions of generalized weights of linear codes embedded with the Hamming and the rank metric are generalized to those of matroids (Definition \ref{defn:22}) and $q$-matroids (Definition \ref{defn:28}), respectively. %
 Here we define the generalized weights of sum-matroids using the nullity function in an analogous manner.
 
 Given a sum-matroid with rank function $\rho$, the \e{nullity function} $\nn$ is defined by $$\eta(\L) = \Rk(\L) - \rho(\L),$$
 for $\L \in \PKn$.
 
 \begin{defn}[Generalized weights of a sum-matroid]\label{def@21}
 Let $M= (\K^{\n}, \rho)$ be a sum-matroid with nullity function $\eta$. The $i$-th generalized weight of $M$ is defined by
 	\[
 	d^S_i(M) = \min\{ \Rk(\L) : \L\in \P(\K^{\n})\text{ and } \eta(\L) = i\},
 	\] for $i = 1, \ldots, \nn(\K^n)$.
 \end{defn}

It is clear that the particular cases of $\n=(1,\ldots,1)$ and $\ell = 1$ recover the Definition \ref{defn:22} of generalized weights of matroids and the Definition \ref{defn:28} of generalized weights of $q$-matroids, respectively. 
 
Analogous to the classical matroid case, here is an alternative expression for $d^S_i(M)$.
 \begin{lem}\label{211}
  Let $M= (\K^{\n}, \rho)$ be a sum-matroid with nullity function $\eta$. Then for $i = 1, \ldots, \nn(\K^n)$,
\begin{align*} & \min\{ \Rk(\L) : \L\in \P(\K^{\n}) \text{ and } \eta(\L) = i\} \\
 & \quad \quad \quad= \min\{ \Rk(\L) : \L\in \P(\K^{\n}) \text{ and } \eta(\L) \geq i\}.
 \end{align*}
 \end{lem}

 \begin{proof}	
For any $i \in \{1, \ldots, \nn(\K^{\n})\}$, we show that if $\min\{ \Rk(\L) : \L\in \P(\K^{\n}) \text{ and } \nn(\L) \geq i\} = \Rk(\L_1)$ for some $\L_1 \in \P(\K^{\n})$ with $\nn(\L_1) \geq i$, then $\nn(\L_1) = i$. Now since $\Rk(\L_1) \geq \nn(\L_1 ) \geq i \geq 1$, we choose $\L_2 \subsetneq \L_1$, i.e., $\Rk(\L_2) = \Rk(\L_1) -1$. Then by minimality of $\Rk(\L_1)$ it follows that $\nn(\L_2) < i $ or $\nn(\L_2) \leq i-1$. Now using the definition of nullity function and the property (R\ref{R2}) of  $\rho$, we get $\nn(\L_1) = \Rk(\L_1) - \rho(\L_1) \leq \Rk(\L_1) - \rho(\L_2) = \Rk(\L_2) +1 - \rho(\L_2) = \nn(\L_2) +1$. Thus $\nn(\L_1) \leq \nn(\L_2) + 1$ implies that $i \leq \nn(\L_1) \leq i$ and therefore $\nn(\L_1) = i$.
This completes the proof.
 \end{proof}

The following result shows that the notion of generalized weights of sum-matroids extends the generalized sum-rank weights of sum-rank metric codes.
 \begin{thm}\label{29}
	Let $\C$ be an $[n,k;n_1,\dots,n_{\ell}]$-sum-rank metric code and let $M_\C$ be the sum-matroid associated to $\C$. Then for $i=1,\ldots,k$,
	\[
	d^S_i(M_\C) = d_{SR,i}(\C).
	\]
\end{thm}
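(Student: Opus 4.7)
The strategy is to reduce the whole statement to a single pointwise identity: for every $\L \in \P(\K^{\n})$,
\[
\eta_{M_\C}(\L) \;=\; \dim_\F(\C \cap \Nu_\L).
\]
Once this identity is available, the theorem follows immediately. Indeed, by Lemma \ref{211} we may replace the equality ``$\eta(\L)=i$'' by the inequality ``$\eta(\L)\ge i$'' in the definition of $d^S_i(M_\C)$, and then substituting the identity above recovers exactly the expression in Definition \ref{def::12}:
\[
d^S_i(M_\C) = \min\{\Rk(\L) : \dim_\F(\C \cap \Nu_\L) \geq i\} = d_{SR,i}(\C).
\]

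To establish the key identity, I first rewrite the nullity using Lemma \ref{27}: since $\rho_\C(\L) = (n-k)-\mu_\C(\L)$, and since $\C(\L) = \C^\perp \cap \Nu_{\L^\perp}$ (as noted in the proof of Lemma \ref{23}), one obtains
\[
\eta_{M_\C}(\L) = \Rk(\L) - (n-k) + \dim_\F(\C^\perp \cap \Nu_{\L^\perp}).
\]
It therefore suffices to prove
\[
\dim_\F(\C \cap \Nu_\L) = \Rk(\L) - (n-k) + \dim_\F(\C^\perp \cap \Nu_{\L^\perp}).
\]
This is a direct consequence of modularity of dimension together with the orthogonality $\Nu_\L^{\perp} = \Nu_{\L^{\perp}}$. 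Applying the modular identity to the subspaces $\C$ and $\Nu_\L$ of $\F^n$, and using $\dim_\F \Nu_\L = \Rk(\L)$ from \eqref{eq:::2}, one gets $\dim(\C+\Nu_\L) = k + \Rk(\L) - \dim(\C \cap \Nu_\L)$; on the other hand, taking orthogonal complements gives $(\C+\Nu_\L)^{\perp} = \C^{\perp} \cap \Nu_\L^{\perp} = \C^{\perp} \cap \Nu_{\L^{\perp}}$, hence $\dim(\C+\Nu_\L) = n - \dim(\C^\perp \cap \Nu_{\L^\perp})$. Equating the two expressions yields the required identity.

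The only non-routine ingredient is the duality $\Nu_\L^{\perp} = \Nu_{\L^{\perp}}$, which is part of the lattice isomorphism of Proposition 1 in \cite{MP2019} invoked in the introduction; once this is granted, the remainder is a direct dimension count of the same type already carried out in the proof of Theorem \ref{210}. I do not anticipate any serious obstacle beyond correctly tracking the two dualities (that of codes and that of the sum-support lattice) in parallel.
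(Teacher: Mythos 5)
Your proof is correct, and its overall architecture coincides with the paper's: both reduce the theorem, via Lemma \ref{211}, to the single pointwise identity $\eta_{M_\C}(\L)=\dim_\F(\C\cap\Nu_\L)$, after which Definition \ref{def::12} gives the conclusion. Where you diverge is in how that identity is verified. The paper applies Theorem \ref{210} to $\C^\perp$ (i.e.\ $\rho_\C = \rho_{\C^\perp}^*$) together with Corollary \ref{cor:::1} and the exact sequence \eqref{eq::3}, so the real dimension count is the one hidden inside the proof of Theorem \ref{210} (the $\rank(GG_1^T)=\rank(G_1G^T)$ argument). You instead bypass Theorem \ref{210} entirely: Lemma \ref{27} converts $\eta_{M_\C}(\L)$ into $\Rk(\L)-(n-k)+\dim(\C^\perp\cap\Nu_{\L^\perp})$, and then modularity of dimension applied to $\C$ and $\Nu_\L$, combined with $(\C+\Nu_\L)^\perp=\C^\perp\cap\Nu_\L^\perp$ and the support-space orthogonality $\Nu_\L^\perp=\Nu_{\L^\perp}$, closes the gap. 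Your computation checks out sign by sign. The one ingredient you import, $\Nu_\L^\perp=\Nu_{\L^\perp}$, is indeed supplied by the lattice (anti-)isomorphism of \cite{MP2019} that the paper also invokes; it can alternatively be verified directly, since $\Nu_{\L^\perp}\subseteq\Nu_\L^\perp$ follows by expanding coordinates over the bases $\Gamma_i$ and pairing rows of the coordinate matrices, and equality then follows from \eqref{eq:::2} by counting dimensions. The net effect is that your argument is somewhat more self-contained and elementary (it does not require Theorem \ref{210} as a prerequisite), while the paper's version factors the work through a duality statement it wants anyway.
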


 \begin{proof}
	Let $\nn_\C$ denote the nullity function of $M_\C$.
	From Definition \ref{def::12} and Lemma \ref{211}, it is clear that to prove the above theorem, it is enough to show that $\dim(\C \cap \Nu_{\L}) = \eta_{\C}(\L)$. By definition, $\eta_{\C}(\L) = \Rk(\L) - \rho_{\C}(\L)$. Applying Theorem \ref{210} to $\C^\perp$ we see that, $\rho_{\C}(\L) = \rho_{\C^\perp}(\L^\perp) + \Rk(\L) - \rho_{\C^\perp}(\K^{\n})$. Therefore, in view of \eqref{eq::3} and Corollary \ref{cor:::1}, $\eta_{\C}(\L) =  \rho_{\C^\perp}(\K^{\n}) - \rho_{\C^\perp}(\L^\perp) = \dim_{\F}{\C} - \dim \C^{\perp}_{\L^{\perp}}=\dim \C^\perp({\L}^{\perp}) = \dim(\C \cap \Nu_{\L}).$ Hence we have the desired result.
\end{proof}

Next we prove the monotonicity property of the generalized weights of sum-matroids which generalizes the analogous results for matroids and $q$-matroids as stated in Proposition \ref{pro:mono} of Section \ref{Sec:2.2}.

\begin{pro}[Monotonicity] Let $M=(\K^{\n}, \rho)$ be a sum-matroid and $d_i =  d^S_i(M)$ is the $i$-th generalized weights of $M$ for $1 \leq i \leq \nn(\K^{\n})$. Then $d_1 < d_2 < \cdots < d_{\nn(\K^{\n})}$. 
\end{pro}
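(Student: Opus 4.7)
The plan is to mimic the standard monotonicity argument for generalized weights of matroids and codes: given an element of $\P(\K^{\n})$ realizing $d_{i+1}$, shrink it by one rank unit to produce a witness for $d_i \leq d_{i+1} - 1$. Only axioms (R\ref{R1}) and (R\ref{R2}), together with Lemma \ref{211}, will be needed; the submodular inequality (R\ref{R3}) plays no role here.

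First I would record a lattice-theoretic observation about $\P(\K^{\n})$: for any element $\L = (\L_1, \ldots, \L_\ell) \in \P(\K^{\n})$ with $\Rk(\L) \geq 1$, there is a coordinate $j$ with $\dim_{K_j}(\L_j) \geq 1$, and replacing $\L_j$ with any $K_j$-hyperplane of it produces $\L' \subsetneq \L$ satisfying $\Rk(\L') = \Rk(\L) - 1$. This is the only step that uses the specific cartesian-product structure of the lattice; every other step invokes only the abstract rank-function axioms.

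Now fix $i$ with $1 \leq i < k$ and choose $\L \in \P(\K^{\n})$ attaining the minimum in Definition \ref{def@21}, so that $\nn(\L) = i+1$ and $\Rk(\L) = d_{i+1}$. Since $\nn(\L) \leq \Rk(\L)$ and $i+1 \geq 1$, we have $\Rk(\L) \geq 1$, so the shrinking step yields $\L' \subsetneq \L$ with $\Rk(\L') = d_{i+1} - 1$. Monotonicity (R\ref{R2}) gives $\rho(\L') \leq \rho(\L)$, and therefore
\[
\nn(\L') \;=\; \Rk(\L') - \rho(\L') \;\geq\; \bigl(\Rk(\L) - 1\bigr) - \rho(\L) \;=\; \nn(\L) - 1 \;=\; i.
\]
Applying Lemma \ref{211} (the formulation of $d_i$ as a minimum over $\{\eta \geq i\}$) to $\L'$ yields $d_i \leq \Rk(\L') = d_{i+1} - 1 < d_{i+1}$, which is the desired strict inequality. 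The proof has no genuine obstacle; the mild subtlety is ensuring that rank in $\P(\K^{\n})$ can always be decreased in unit steps, which is immediate from the product structure.
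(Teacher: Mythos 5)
Your proof is correct and follows essentially the same route as the paper: pick $\L$ realizing $d_{i+1}$, pass to $\L' \subsetneq \L$ with $\Rk(\L') = \Rk(\L) - 1$, use (R\ref{R2}) to get $\eta(\L') \geq i$, and conclude via Lemma \ref{211}. If anything, your write-up is slightly cleaner than the paper's (which introduces an unnecessary direct-sum complement $\L_0$ and contains a sign typo in the key inequality), but the idea is identical.
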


\begin{proof}
Fix $i \in \{1, \ldots, \nn(\K^{\n})-1\}$. From Definition \ref{def@21} and Lemma \ref{211}, it is obvious that $d_{i+1}\geq d_i$. Let $\L \in \P(\K^{\n})$ such that $\Rk(\L) = d_{i+1}$ and $\nn(\L)=i+1$. We want to show that there is an $\L' \in \PKn$ with $\L' \subseteq \L$ such that $\eta(\L') \geq i$ and $\Rk(\L')= \Rk(\L) -1$. As $\Rk(\L) \geq \nn(\L) \geq 2$, we can always choose $\L' \subsetneq \L$	with $\Rk(\L^{\prime})= \Rk(\L)-1$.  
	
	Now note that $i+1 = \eta(\L) = \Rk(\L) - \rho(\L) \leq \Rk(\L') +1 - \rho(\L') =  \eta(\L') +1$. So $\eta(\L') \geq i$, i.e., $d_i \leq \Rk(\L') = d_{i+1} - 1$. Therefore we obtain $d_i < d_{i+1}$ for any $1 \leq i  \leq \nn(\K^{\n})-1$.
\end{proof}	

Now we give a Wei-type duality theorem for sum-matroids which generalizes all the analogous results for codes as in Theorem \ref{thm@5} and for matroid structures in Theorem \ref{Wei:matroids}.  
 \begin{thm}[Wei-type duality]\label{thm@55}
	Let $M=(\K^{\n}, \rho)$ be a sum-matroid with rank $k$ and $M^*=(\K^{\n}, \rho^*)$ be its dual. Also let $n = n_1 + \dots + n_{\ell}$. If we let $d_i =  d_i^S(M)$ for $i = 1, \ldots, n-k$ and  $d_j^{\perp} =  d_j^S(M^*)$ for $j =1, \ldots, k$, then it holds
	that\[
	\{1, 2, . . . , n\} = \{d^{\perp}_1, \ldots , d^{\perp}_k \}\cup\\
	\{n + 1 - d_1, \ldots, n+1-d_{n-k} \},
	\]
	where the union is disjoint. In particular, the generalized weights of a sum-matroid $M$ uniquely
	determine those of $M^*$. 
\end{thm}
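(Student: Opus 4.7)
The plan is to reduce Wei-type duality to a comparison between the staircase profiles of $M$ and $M^*$. For each integer $a \in \{0, 1, \ldots, n\}$, set
\[
\eta_{\max}(a) := \max\{\eta(\L) : \L \in \P(\K^{\n}), \ \Rk(\L) \le a\},
\]
and define $\eta^*_{\max}$ analogously for $M^*$. The first step is to establish a unit-increase property for a sum-matroid: if $\L \subseteq \L'$ with $\Rk(\L') = \Rk(\L)+1$, then $\rho(\L') - \rho(\L) \in \{0,1\}$. This follows from the rank axioms, since the product structure of $\P(\K^{\n})$ supplies a rank-one $\L_0 \subseteq \L'$ with $\L' = \L + \L_0$ and $\L \cap \L_0 = \0$, and then semimodularity combined with $\rho(\L_0) \le \Rk(\L_0) = 1$ yields the claim. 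Iterating along a saturated chain makes $\eta$ itself monotone on $\P(\K^{\n})$, so $\eta_{\max}$ is a non-decreasing integer-valued function with unit steps, $\eta_{\max}(0) = 0$, $\eta_{\max}(n) = \eta(\K^{\n}) = n - k$; likewise $\eta^*_{\max}(0) = 0$ and $\eta^*_{\max}(n) = k$. Also, since $\eta$ is monotone, the maximum defining $\eta_{\max}(a)$ is attained at rank exactly $a$, so $\eta_{\max}(a) = a - \min\{\rho(\L) : \Rk(\L) = a\}$.

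Second, I would identify the jump sets of the two staircases with the respective generalized weights. By Lemma \ref{211}, $d_i = d_{SR,i}(M)$ equals the smallest $a$ with $\eta_{\max}(a) \ge i$, and the $d_i$ are strictly increasing by the preceding monotonicity proposition. Hence $\{d_1, \ldots, d_{n-k}\}$ is precisely the set of $a \in \{1, \ldots, n\}$ at which $\eta_{\max}$ jumps, and symmetrically $\{d_1^\perp, \ldots, d_k^\perp\}$ is the jump set of $\eta^*_{\max}$.

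The crucial step is to relate the two staircases via the identity $\eta^*(\L) = k - \rho(\L^\perp)$, a direct consequence of Definition \ref{def@16}. Since $\L \mapsto \L^\perp$ is an order-reversing bijection on $\P(\K^{\n})$ with $\Rk(\L^\perp) = n - \Rk(\L)$ (the complement being taken componentwise), and since $\rho$ is monotone (so that $\min\{\rho(\M) : \Rk(\M) \ge n-a\}$ is attained at $\Rk(\M) = n-a$), a short computation based on the formula from the first paragraph gives
\[
\eta^*_{\max}(a) = k + a - n + \eta_{\max}(n-a).
\]
Consequently $\eta^*_{\max}(a) - \eta^*_{\max}(a-1) = 1 - \bigl(\eta_{\max}(n-a+1) - \eta_{\max}(n-a)\bigr)$, so $a$ is a jump of $\eta^*_{\max}$ if and only if $n+1-a$ is not a jump of $\eta_{\max}$. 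Combined with the jump-set identification, this gives $\{n+1-d_j^\perp : 1 \le j \le k\} = \{1, \ldots, n\} \setminus \{d_1, \ldots, d_{n-k}\}$, which is the required disjoint union. I expect the main hurdle to be the derivation of the displayed identity, as it bundles together the behaviour of the componentwise orthogonal complement on $\P(\K^{\n})$ with the monotonicity of $\rho$; the rest is a direct transcription of the classical Britz--Johnsen--Mayhew--Shiromoto staircase argument for matroids into our setting.
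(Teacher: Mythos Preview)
Your proof is correct and takes a genuinely different route from the paper's. The paper argues pointwise in the style of Wei's original proof: for each $r$ it picks an $\L$ realizing $d_r$, uses the identity $\eta^*(\L^\perp) = k - \rho(\L) = k - d_r + \eta(\L)$ to deduce $d_t^\perp \le n - d_r$ for $t = k + r - d_r$, and then separately rules out $d_{t+s}^\perp = n - d_r + 1$ by a short contradiction. You instead aggregate the same identity into a single global relation $\eta^*_{\max}(a) = k + a - n + \eta_{\max}(n-a)$ between two staircase profiles, from which the complementarity of jump sets follows in one line. The staircase route buys conceptual transparency: once the unit-increase property of $\rho$ along saturated chains is established, the disjoint-union statement is an immediate combinatorial consequence, and the argument visibly specialises to the classical Britz--Johnsen--Mayhew--Shiromoto treatment for matroids. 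The paper's pointwise approach, by contrast, needs no preliminary development of $\eta_{\max}$ or its unit-step behaviour, so it reaches the conclusion with less infrastructure at the cost of a less symmetric presentation; in particular, it never explicitly isolates the unit-increase lemma that your argument relies on, absorbing it instead into the final monotonicity appeal.
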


\begin{proof}
	It is sufficient to show that for a fixed $r$ with $1 \leq r \leq n-k$,
	\begin{enumerate}[(1)]
		\item $d_t^{\p} \leq n - d_r$ for $t=k+r-d_r$,
		\item For every $s>0$, $d_{t+s}^{\perp} \neq n-d_r +1$ where $t = k+r-d_r$.
	\end{enumerate}
	If we assume these to be true, then we have $d_t^{\p} < n-d_r+1$ and $d_{t+s}^{\p} \neq n-d_r+1$ $\forall$ $s > 0$ and for $t =k+r-d_r$. So $ n-d_r+1 \notin \{d_1^{\p}, \ldots, d_k^{\p}\}$, for all $1 \leq r \leq n-k$. As all these $d_i$ and $d_i^{\p}$ are positive integers less or equal to $n$, the monotonicity property implies the desired result.
	
	Now to prove (1), first we fix $r$ such that $1 \leq r \leq n-k$. Let $\nn$ (resp. $\nn^*$) denote the nullity function of $M$ (resp. $M^*$). Let $\L \in \PKn$ be such that $d_r = \Rk(\L)$ and $ \eta(\L)\geq r$. Then
\begin{align*}
\eta^*(\L^\perp)&= \Rk(\L^\perp)-\rho^*(\L^\perp)\\
&= \Rk(\L^\perp)-\rho(\L)-\Rk(\L^\perp)+\rho(\K^{\n}).
\end{align*}
	Hence
	\[
	d_r-k+\eta^*(\L^\perp) = \Rk(\L)-\rho(\K^{\n})+\Rk(\L^\perp)-\rho(\L)-\Rk(\L^\perp)+\rho(\K^{\n})=\eta(\L).	
	\]
	But $\eta(\L) \geq r$, this implies that $\eta^{*}(\L^{\p}) \geq r+k-d_r=t$. Since $\Rk(\L^{\p}) = n-d_r$, it follows that $d_t^{\p} \leq n-d_r$.
	 
	 Now we prove $(2)$. For some $s > 0$, assume that $d_{t+s}^{\p} = n-d_r+1$. Let $\L \in \PKn$ be such that $d_{t+s}^{\p} = \Rk(\L)$. By definition, $\eta^{*}(\L) \geq t+s$ and in particular, $\eta^{*}(\L) > t$.
	Now 
	\begin{align*}
		&\Rk(\L)-\rho^{*}(\L) > t\\
	    \Rightarrow \quad&n-d_r+1 - \rho(\L^{\p}) - \Rk(\L) +\rho(\K^{n}) >t \\
		\Rightarrow \quad&n-d_r+1 - \rho(\L^{\p}) - n+ \Rk(\L^{\p}) +\rho(\K^{n}) > t\\
		\Rightarrow \quad&\eta(\L^{\p}) - d_r+1+k >t\\
		\Rightarrow \quad&\eta(\L^{\p}) >t+d_r-1-k\\
		\Rightarrow \quad&\eta(\L^{\p})>r-1\\
		\Rightarrow \quad&\eta(\L^{\p})\geq r.
	\end{align*}
	So $d_r \leq \Rk(\L^{\p}) = n- d_{t+s}^{\p} = d_r -1$, which is not possible. Hence our assumption that $d_{t+s}^{\p} = n-d_r+1$ is not true. Thus $(2)$ is proved and it completes the proof of the theorem.
	
\end{proof}

The following is the definition of uniform sum-matroids, which is a generalization of uniform matroids \cite{Ox06} and uniform $q$-matroids \cite{JP2016}. 
\begin{defn}
	Let $k$ be an integer $\leq n$. The uniform sum-matroid $U_{k,n}$ is defined as $(\K^{\n}, \rho)$, where $\rho(\L) = \Rk(\L)$ for all $\L \in \P(\K^{\n})$ with $\Rk(\L) \leq k$, while $\rho(\L) = k$ if $\Rk(\L)>k$.
\end{defn}

Using the following characterization of $r$-MSRD codes given in \cite{MP2019}, we show that uniform sum-matroids correspond to the MSRD codes given in Definition \ref{def@13}. 

 \begin{thm}[\cite{MP2019}]
	Given any integers $r,k$ with $1 \leq r \leq k$, a $[n,k;n_1,\dots,n_{\ell}]$- linear code $\C \subseteq \F^n$ is $r$-MSRD
	if and only if $\C \AA \subseteq \F^n $ is $r$-MDS, for all $\AA = \diag(\AA_1, \ldots , \AA_{\ell}) \in M_{n \times n}(\F)$ such that $\AA_i \in GL_{n_i}(K_i)$  for $i = 1, \ldots, \ell$.
\end{thm}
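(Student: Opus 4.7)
The plan is to derive the equivalence from the identity
\begin{equation*}
d_{SR,r}(\C) \,=\, \min_{\AA} d_{H,r}(\C\AA) \qquad\text{and}\qquad d_{SR,r}(\C) \,\leq\, d_{H,r}(\C\AA)
\end{equation*}
for every block-diagonal $\AA = \diag(\AA_1,\ldots,\AA_\ell)$ with $\AA_i \in GL_{n_i}(K_i)$. Combined with the Hamming Singleton bound $d_{H,r}(\C\AA) \leq n-k+r$, this identity yields both directions in a handful of lines, so essentially all the work is packed into proving it.

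First I would rewrite Definition \ref{def::12} in the equivalent form
\[
d_{SR,r}(\C) \,=\, \min\bigl\{\mathrm{wt}_{SR}(\D) : \D \subseteq \C,\ \dim_\F \D = r\bigr\},
\]
obtained by taking $\D$ to be an $r$-dimensional subspace of $\C \cap \Nu_\L$, for which $\mathrm{supp}(\D) \subseteq \L$. The analogous formula holds for $d_{H,r}(\C\AA)$ because $\AA$ is invertible. The identity then reduces to the pointwise statement that for every fixed $r$-dimensional $\D \subseteq \C$,
\[
\mathrm{wt}_{SR}(\D) \,\leq\, |\mathrm{supp}_H(\D\AA)| \ \text{for every }\AA, \qquad \mathrm{wt}_{SR}(\D) \,=\, \min_{\AA} |\mathrm{supp}_H(\D\AA)|,
\]
after which one swaps the two minimizations to recover the original identity.

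For the pointwise statement, write $\mathrm{supp}(\D)=(L_1,\ldots,L_\ell)$ with $r_i=\dim_{K_i} L_i$. Since $\Gamma_i(\c\AA_i)=\Gamma_i(\c)\AA_i$, the rank-support of $\D^{(i)}\AA_i$ is $L_i\AA_i$, again of dimension $r_i$. The Hamming support of $\D^{(i)}\AA_i$ in $\F^{n_i}$ coincides with the coordinate shadow of $L_i\AA_i$ (the set of indices $j$ for which some element of $L_i\AA_i$ has nonzero $j$-th entry), since a position lies in the Hamming support iff it lies in the column-support of some coordinate matrix $\Gamma_i(\c\AA_i)$. An $r_i$-dimensional subspace cannot sit inside a coordinate subspace of dimension less than $r_i$, so the shadow has size at least $r_i$; and it has size exactly $r_i$ when $\AA_i$ is chosen to send $L_i$ onto $K_i^{r_i}\times\{0\}^{n_i-r_i}$, which is arranged by extending a basis of $L_i$ to a basis of $K_i^{n_i}$. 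Summing over $i$ gives both halves of the pointwise claim.

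With the identity in hand, the equivalence is a chase of inequalities. For $(\Rightarrow)$, if $\C$ is $r$-MSRD, then for every $\AA$ the sandwich $n-k+r = d_{SR,r}(\C) \leq d_{H,r}(\C\AA) \leq n-k+r$ forces $d_{H,r}(\C\AA)=n-k+r$; and an equality $d_{H,i}(\C\AA)=n-k+i$ for some $i<r$ would yield $d_{SR,i}(\C)=n-k+i$ via $d_{SR,i}(\C)\leq d_{H,i}(\C\AA)$ and the sum-rank Singleton bound, contradicting minimality of $r$ for $\C$. For $(\Leftarrow)$, minimizing the hypothesis $d_{H,r}(\C\AA)=n-k+r$ over $\AA$ gives $d_{SR,r}(\C)=n-k+r$; and for $i<r$ the assumption that every $\C\AA$ is $r$-MDS forces $d_{SR,i}(\C)\leq d_{H,i}(\C\AA)<n-k+i$, so $r$ is minimal for $\C$ as well. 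The main technical obstacle is thus the shadow computation --- the fact that the minimum Hamming-support size of an $r$-dimensional $L\BB$ over $\BB\in GL_n(K)$ equals $r$. The lower bound is dimensional and the upper bound demands an explicit choice of $\BB$; once these are pinned down the remainder is bookkeeping and the min-swap.
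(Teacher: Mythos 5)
The paper itself gives no proof of this statement --- it is quoted from \cite{MP2019} --- so there is no in-paper argument to compare against. Your overall route is the natural (and, as far as I can tell, the intended) one: the identity $d_{SR,r}(\C)=\min_{\AA} d_{H,r}(\C\AA)$ via the subcode formulation of the generalized weights, the compatibility $\Gamma_i(\c^{(i)}\AA_i)=\Gamma_i(\c^{(i)})\AA_i$, the lower bound on the coordinate shadow of an $r_i$-dimensional space, and the explicit block-wise minimizer. That part of the write-up is correct, and so is the whole $(\Leftarrow)$ direction.

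The gap is in the minimality step of $(\Rightarrow)$. You claim that $d_{H,i}(\C\AA)=n-k+i$ for some $i<r$ would force $d_{SR,i}(\C)=n-k+i$ ``via $d_{SR,i}(\C)\leq d_{H,i}(\C\AA)$ and the sum-rank Singleton bound''. Both of those are \emph{upper} bounds on $d_{SR,i}(\C)$, so they cannot combine to an equality; and the one inequality you do have points the wrong way, since a single $\AA$ attaining the Hamming Singleton bound says nothing about the minimum over all $\AA$. This is not merely a write-up issue: under Definition~\ref{def@13}, where ``$r$-MDS'' requires $r$ to be the \emph{minimal} index attaining the bound, the sub-claim is false. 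Take $\ell=1$, $K_1=\F_2$, $\F=\F_8$ with basis $\{1,\gamma,\gamma^2\}$, and $\C=\langle(1,1,0),(0,\gamma,\gamma^2)\rangle\subseteq\F_8^3$, so $n=3$, $k=2$. The codeword $(1,1,0)$ has rank $1$, so $d_{R,1}(\C)=1<2$, while $\mathrm{supp}(\C)=\F_2^3$ gives $d_{R,2}(\C)=3=n-k+2$; hence $\C$ is $2$-MRD. Yet every nonzero codeword $(a,a+b\gamma,b\gamma^2)$ has Hamming weight at least $2=n-k+1$, so $\C\cdot I=\C$ is $1$-MDS and therefore not $2$-MDS. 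The equivalence is correct only if ``$r$-MDS''/``$r$-MSRD'' is read as ``$d_r=n-k+r$'' without minimality --- and under that reading your identity together with the Hamming Singleton bound already yields both directions in one line, with no minimality bookkeeping at all. You should either adopt that reading explicitly or flag the tension with Definition~\ref{def@13}.
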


\begin{exa}
Let $\C$ be a $[n,k;n_1,\dots,n_{\ell}]$-MSRD code. From the above result $\C \AA$ is MDS, for all  $\AA = \diag(\AA_1, \ldots , \AA_l) \in \F^{n \times n}$ such that $\AA_i \in K_i^{n_i \times n_i}$ is invertible, for $i = 1, \ldots, l$. 
We show that the sum-matroid corresponding to $\C$ is a uniform sum-matroid $U_{n-k,n}$ and the corresponding dual sum-matroid is also a uniform sum-matroid $U_{k,n}$. Let $M_{\C}$ be the corresponding sum-matroid. 
Let $\H \in M_{n-k \times n}(\F)$ be a parity check matrix of $\C$. Suppose $\L = (\L_1, \ldots, \L_l) \in \P(\K^{n})$ has rank $N$, where $\dim_{K_i}(\L_i)=N_i$ for $i=1,\ldots,l$ and $N=N_1+\dots+N_l$. 
Consider $\G_i \in M_{N_i \times n_i}(\K_i)$ to be a generator matrix of $\L_i$ for $i=1,\ldots,l$. Then we can extend $\G_i$ to invertible matrices $\tilde{\G}_i \in M_{n_i \times n_i}(\K_i)$ for $i=1,\ldots,l$. 
Consider $\AA = \diag(\tilde{\G}_1^T,\ldots,\tilde{\G}_l^T)$. It is clear that $\H\AA \subseteq \F^n$ generates an $[n,n-k]$ MDS code over $\F$ from the previous theorem. 
From a characterization of MDS code (see Theorem \ref{thm::24} for example) we know that any set of $n-k$ columns of $\H\AA$ is linearly independent over $\F$. Now if $\Rk(\L) = N \geq n-k$, then $\rho_{\C}(\L)= \dim_{\F}(\C_{\L}) = n-k$ and for $N<n-k , \, \rho_{\C}(\L)= \dim_{\F}(\C_{\L}) = N$. Hence $M_{\C}$ is a uniform sum-matroid of rank $n-k$. The dual sum-matroid of rank $k$ is also a uniform sum-matroid of rank $k$ as it is coming from the dual of an MSRD code which is again MSRD (See \cite[Thm. 5]{MP2019}).
\end{exa}  
\begin{rem}
The correspondence between MSRD codes and uniform sum-matroids is the extension of the correspondence between MDS (resp. MRD) codes and uniform matroids (resp. $q$-matroids).
\end{rem}

\section{Conclusion and further comments}\label{Sec:6}

 We have defined sum-matroids in terms of rank functions. The rank function of sum-matroids generalizes the rank function of matroids and $q$-matroids. So it is  natural to ask which results in matroid theory will have sum-matroid analogue. We have answered this question partially. Duality, generalized weights, which are important invariants from coding theoretic point of view, have been defined more generally for sum-matroids. There are many cryptomorphic definitions of $q$-matroids \cite{JP2016,BCR21} which are $q$-analogues of those for matroids \cite{Whi86}. One can expect to find analogous cryptomorphic definitions for sum-matroids also. 

Further, the rank generating functions for sum-matroids can be studied. More importantly, as noted in \cite{JP2016}, matroids and $q$-matroids are particular cases of a more general structure of modular lattices which are complemented and some work about them were done by Crapo in \cite{Cra64}. Sum-matroids also appear to be complemented lattices and hence we can get hints from \cite{Cra64} and further develop the theory of sum-matroids. 
Matroids induced from matrices are called representable matroids. As we mentioned before, there are matroids which are non-representable (See, e.g., the V{\'a}mos matroid $V_8$ \cite{Ox06}). Constructions of non-representable $q$-matroids also exist (See \cite{BCR21,CJ21}). So one might ask if there is any non-representable sum-matroid which is neither a matroid nor a $q$-matroid.

As we mentioned in the introduction, one can associate a Stanley-Reisner ring to a matroid and the Betti numbers of the ring are related to the generalized weights of a linear code if the matroid is constructed from that code. In fact, for constant weights codes and more generally, for linear codes with a pure resolution, the generalized weights and Betti numbers define each other \cite{JV13,JPV21}. We want to find a similar result for sum-matroids but anyhow, a study about the constant sum-rank weight linear codes is a good step toward this. 

\subsection*{\centering Acknowledgment}	
We thank Prof. Sudhir Ghorpade for reading our work and giving many useful comments that helped us to make approapriate revisions in this version. 
	

\end{document}